\newtheorem{satz}{Theorem}
\newtheorem{theorem}[satz]{Theorem}
\newtheorem{lemma}[satz]{Lemma}
\newtheorem{corollary}[satz]{Corollary}
\newtheorem{remark}[satz]{Remark}
\def\T{\mathsf{T}}
\def\Z{\mathbb {Z}}
\def\F{\mathbb {F}}
\def\E{\mathsf{E}}
\def\I{{\cal I}}
\def\a{\alpha}
\def\C{\mathbb{C}}
\def\d{\delta}
\def\o{\omega}
\def\({\big (}
\def\){\big )}
\def\le{\leqslant}
\def\ge{\geqslant}
\def\_phi{\varphi}
\def\eps{\varepsilon}
\def\Gr{{\mathbf G}}
\def\Cf{{\mathcal C}}
\def\la{\lambda}
\def\D{\Delta}
\def\R{\mathbb {R}}
\author{Shkredov I.D.}
\title{On multiplicative properties of combinatorial cubes
	\footnote{This work is supported by the Russian Science Foundation under grant 19--11--00001.}
}
\date{}
\begin{document}
	\maketitle

	\begin{abstract}
	We obtain a series of
	lower bounds for the product set of combinatorial cubes, as well as some non--trivial upper estimates for the multiplicative energy of such sets.  
	\end{abstract}

\section{Introduction}

The notion of a combinatorial (Hilbert) cube in $\R$ was defined by Hilbert in \cite{Hilbert} as follows: having a set of non--zero integers $a_0, a_1,\dots,a_d$ put 
\begin{equation}\label{def:cube_intr}
	Q(a_0,a_1,\dots, a_d) =  \left\{ a_0 + \sum_{j=1}^d \eps_j a_j ~:~ \eps_j \in \{0,1\} \right\} \,.
\end{equation}
Combinatorial cubes play an important role in the proof of Szemer\'edi's celebrated theorem \cite{Sz}. 
There is a wide literature on Hilbert cubes, e.g., see 
\cite{Csikvari}---\cite{H_P}
and other papers.

One can see from definition \eqref{def:cube_intr} that any combinatorial cube is
an additively rich set.
If so, then by the sum--product phenomenon (see, e.g., \cite{TV}) one can suppose that the cubes should have relatively weak multiplicative structure. 
This idea  was introduced in  \cite{H_mult}, where the first results on cubes in the prime field $\F_p$ were obtained. 
The bounds here depended on the characteristic $p$ (e.g., see \cite[Proposition 3.1]{H_mult}). 
Let us formulate some particular cases of the main results of our paper, see Theorems \ref{t:energy_Q1}, \ref{t:energy_Q1_m}, \ref{t:sigma_new} 
below.

\begin{theorem}
	Let $Q=Q(a_0,a_1,\dots, a_d) \subseteq \R$ be a combinatorial cube. 
	Then there is an absolute constant $c>0$ such that 
\[
	\E^\times (Q) := |\{ (q_1,q_2,q_3,q_4) \in Q^4 ~:~ q_1 q_2 = q_3 q_4 \}| \ll |Q|^{3-c} \,.
\]
	Moreover, 
\[
 	|QQ|\gtrsim |Q|^{100/79},\,~~  |Q/Q| \gtrsim |Q|^{14/11} \quad  \quad \mbox{ and } \quad \quad |Q Q|, |Q/Q| \gg \min\{ |Q|^{6/5} , \sqrt{|Q| |\F|} \} 
\]
	for $\F = \R$ and $\F = \F_p$, 
	correspondingly. 
\label{t:Q_intr}
\end{theorem}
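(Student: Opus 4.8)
\medskip
\noindent\emph{Proof strategy.}
The three displayed inequalities are special cases of the energy bounds of Theorems \ref{t:energy_Q1}, \ref{t:energy_Q1_m} and of the product-set estimates of Theorem \ref{t:sigma_new}, so the task splits into an energy bound over $\Z$ (equivalently $\R$), its $\F_p$-analogue, and the lower bounds for $|QQ|$, $|Q/Q|$. The single structural input used throughout is that a cube is an extremely structured sumset: distributing the one-element factors $\{0,a_1\},\dots,\{0,a_d\}$ greedily into two bins, always adjoining the next factor to the currently smaller bin, gives $Q-a_0=A+B$ with $A,B$ again combinatorial cubes and $|A|,|B|\gg|Q|^{1/2}$ (indeed $|A|\asymp|B|\asymp|Q|^{1/2}$ when $Q$ is proper, since then $|A||B|=|Q|$). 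Two trivial remarks make this useful: a translate of a cube is a cube, and a cube with at least three elements is never a geometric progression (a one-line computation with the defining equation). This is the heuristic reason a cube must be multiplicatively poor, and the proof is a quantitative version of it.

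\medskip
For the energy bound $\E^\times(Q)\ll|Q|^{3-c}$ over $\R$, I would split on the additive doubling of $Q$. If $|Q+Q|\le|Q|^{3/2-\eta}$ then Solymosi's inequality $\E^\times(Q)\ll|Q+Q|^2\log|Q|$ gives at once $\E^\times(Q)\ll|Q|^{3-2\eta}\log|Q|\ll|Q|^{3-c}$. The genuinely hard case is $|Q+Q|>|Q|^{3/2-\eta}$, i.e. $Q$ almost additively dissociated: since $Q+Q-2a_0$ is the sumset $\{0,a_1,2a_1\}+\dots+\{0,a_d,2a_d\}$, near-maximal $|Q+Q|$ forces the generators to carry almost no linear relations with small coefficients; expressing $|Q\cap\lambda Q|$ for $\lambda=u/v$ in lowest terms as the number of solutions of $\sum_{j=0}^{d}\delta_j a_j=0$ with $\delta_0=v-u$ fixed and $\delta_j\in\{0,v,-u,v-u\}$, one should be able to deduce that only few $\lambda$ have $|Q\cap\lambda Q|\ge2$, so that $\E^\times(Q)=\sum_\lambda|Q\cap\lambda Q|^2$ stays close to its trivial lower bound $|Q|^2$. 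Making the chain ``near-maximal doubling $\Rightarrow$ almost no small relations $\Rightarrow$ $|Q\cap\lambda Q|$ is supported essentially on $\lambda=1$'' into a robust quantitative statement is, I expect, the main obstacle, and is exactly what fixes the constant $c$; it is also the step where the combinatorics peculiar to Hilbert cubes, and not merely generic additive structure, is unavoidable.

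\medskip
The product-set bounds over $\R$ come from feeding the energy estimate --- through $|QQ|,|Q/Q|\ge|Q|^4/\E^\times(Q)$ --- together with a Szemer\'edi--Trotter/Elekes-type incidence count run directly on $(A+B)\times(A+B)$ and the multiplicative lines $\{y=\lambda x\}$, exploiting that $Q-a_0=A+B$ is a balanced sumset, into the sharpest currently available sum--product lemmas (of Konyagin--Shkredov, Rudnev--Shkredov type) and optimising the exponents; this is what produces the slightly unusual values $|QQ|\gtrsim|Q|^{100/79}$ and $|Q/Q|\gtrsim|Q|^{14/11}$. Over $\F_p$ one repeats the whole scheme with Rudnev's point--plane incidence bound in place of Szemer\'edi--Trotter (and of Solymosi's inequality); this input is lossless only while $|Q+Q|\ll p$, and bookkeeping the threshold at which it saturates inserts the factor $\sqrt{|Q||\F|}$, giving $|QQ|,|Q/Q|\gg\min\{|Q|^{6/5},\sqrt{|Q||\F|}\}$ --- the second term being simply the ceiling any incidence argument can reach once $|Q|\gg p^{5/6}$.
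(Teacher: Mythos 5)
Your proposal has a genuine gap at the centre of the energy argument, and the product--set part misses the identity that actually produces the stated exponents.

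For $\E^\times(Q)\ll|Q|^{3-c}$ your dichotomy leaves the essential case unproved: you yourself flag that the implication ``near--maximal $|Q+Q|$ $\Rightarrow$ few $\lambda$ with $|Q\cap\lambda Q|\ge2$'' is the main obstacle, and I do not see how to close it. Over $\R$ the dilation $\lambda$ is an arbitrary real, so writing $\lambda=u/v$ in lowest terms and reducing to relations $\sum_j\delta_ja_j=0$ with $\delta_j$ in a fixed finite set is simply not available; and even over $\Z$, the hypothesis $|Q+Q|\ge|Q|^{3/2-\eta}$ (which is the \emph{typical} case, since $|Q+Q|$ can reach $3^d=|Q|^{\log_23}$ for a proper cube) still permits a huge number of additive collisions, far too many to force $\E^\times(Q)$ close to $|Q|^2$. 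The paper's mechanism is different: Lemma \ref{l:Q^(k)} shows that for \emph{every} pair $b_1,b_2\in Q$ either $r_{Q+Q}(b_1+b_2)\ge\sqrt{|Q|}$ or $r_{Q-Q}(b_1-b_2)\ge\sqrt{|Q|}$, because $b_1+b_2$ and $b_1-b_2$ are reproduced by complementary sub--cubes $Q(P_1)$ and $Q(P_2\sqcup Z)$ whose sizes multiply to at least $|Q|$; hence $\E^{+}(B,Q)\ge|B|^2|Q|^{1/2}$ for every $B\subseteq Q$. Writing $\E^\times(Q)=|Q|^3/M$, Balog--Szemer\'edi--Gowers produces $B\subseteq Q$ with $|B|\gg_M|Q|$, $|BB|\ll_M|B|$, and the affine--group bound of Theorem \ref{t:4.5-eps} caps $\E^{+}(B,Q)$ at $\ll_M|Q|^{5/2-\delta}$, contradicting the lower bound unless $M\gg|Q|^{c}$. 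Your bisection $Q-a_0=A+B$ combined with the Shkredov--Zhelezov estimate does recover the energy bound, but only when $|A||B|\ll|Q|^{1+o(1)}$, i.e.\ essentially for proper cubes (this is exactly Remark \ref{r:SZ}); for a general cube $|A||B|$ may vastly exceed $|Q|$ and that route fails.

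For the product sets, passing from the energy bound through $|QQ|\ge|Q|^4/\E^\times(Q)$ gives only $|Q|^{1+c}$ with an ineffective $c$ (the BSG step destroys any explicit exponent) and cannot yield $100/79$, $14/11$ or $6/5$. The paper instead exploits the reflection symmetry \eqref{f:symm_cube}, $Q=(U+2a_0)-Q$: the equation $x=U'-y$, $x,y\in Q$, has $|Q|$ solutions, each side has $|Q|$ representations of the form $\pi/q$ with $\pi\in QQ$, $q\in Q$, and an incidence count --- Szemer\'edi--Trotter in $\R$, the Stevens--de Zeeuw point--line bound (Theorem \ref{t:SdZ}, not a point--plane bound) in $\F_p$ --- gives $|QQ|\gg|Q|^{5/4}$, respectively $\min\{|Q|^{6/5},\sqrt{|Q|p}\}$; the exponents $100/79$ and $14/11$ then come from Theorem \ref{t:s_sumsets} applied with $D_1=D_2^2=(|QQ|/|Q|)^2$. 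None of these steps appears in your sketch, so as written the proposal does not establish any of the three displayed estimates.
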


Here,  as always, we write $A+B$ for the sumset of sets $A$, $B$, further, $AB$ for the product set of $A,B$ and so on.
In other words,  
\[
	A+B:=\{a+b ~:~ a\in{A},\,b\in{B}\}\,, \quad AB:=\{ab ~:~ a\in{A},\,b\in{B}\} \,, 
\]
$$A/B:=\{a/b ~:~ a\in{A},\,b\in{B},\,b\neq0\}\,.$$

Finally, it is possible to  replace the addition to the multiplication in definition \eqref{def:cube_intr}, namely, one can consider  
\begin{equation}\label{def:cube_intr_mult}
	Q^\times (a_0,a_1,\dots, a_d) =  \left\{ a_0 \prod_{j=1}^d a^\eps_j ~:~ \eps_j \in \{0,1\} \right\} \,.
\end{equation}
Then we obtain an analogue of   Theorem \ref{t:Q_intr} for such cubes.

\begin{theorem}
	Let $Q=Q^\times (a_0,a_1,\dots, a_d) \subseteq \R$ be a combinatorial cube.
	Then there is an absolute constant $c>0$ such that 
\begin{equation}\label{f:Q_intr2_en}
	\E^+ (Q)  := |\{ (q_1,q_2,q_3,q_4) \in Q^4 ~:~ q_1 + q_2 = q_3 + q_4 \}| \ll |Q|^{3-c} \,.
\end{equation}
Moreover, 
\[
|QQ| \gtrsim |Q|^{100/79},\,~~ |Q/Q| \gg |Q|^{14/11} \quad  \quad \mbox{ and } \quad \quad |Q Q|, |Q/Q| \gg \min\{ |Q|^{31/30} , \sqrt{|Q| |\F|} \} 
\]
for $\F = \R$ and $\F = \F_p$, correspondingly.\\ 
	Finally, in $\F_p$ estimate \eqref{f:Q_intr2_en} takes place, provided  $|Q| \le p^{13/23}$. 
\label{t:Q_intr2}
\end{theorem}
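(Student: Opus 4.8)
The plan is to derive Theorem~\ref{t:Q_intr2} from Theorem~\ref{t:Q_intr} by repeating the argument of the latter with the roles of addition and multiplication interchanged; this is legitimate because definition \eqref{def:cube_intr_mult} of a multiplicative cube is obtained from definition \eqref{def:cube_intr} of an ordinary combinatorial cube by the formal substitution $+\leftrightarrow\times$. A preliminary reduction removes the signs: put $\widehat Q=Q^\times(|a_0|,|a_1|,\dots,|a_d|)$, a multiplicative cube with positive generators. Since $\widehat Q=\{|q|:q\in Q\}$ we have $|Q|/2\le|\widehat Q|\le|Q|$; the map $x\mapsto|x|$ sends $QQ$ onto $\widehat Q\widehat Q$ and $Q/Q$ onto $\widehat Q/\widehat Q$, while writing $q_i=\mathrm{sgn}(q_i)\,|q_i|$ and summing over the $16$ sign patterns gives $\E^+(Q)\ll\E^+(\widehat Q)$. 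So it suffices to prove everything for $\widehat Q$, that is, we may assume $a_0,\dots,a_d>0$. In that case, over $\F=\R$, the logarithm is an injection identifying $\widehat Q$ with the additive combinatorial cube $Q(\log a_0,\dots,\log a_d)$ of the same cardinality — the concrete form of the duality — and over $\F=\F_p$ the index with respect to a primitive root does the same, carrying $\widehat Q\subseteq\F_p^*$ onto an additive cube in $\Z/(p-1)$.

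For the energy bound \eqref{f:Q_intr2_en} I would mimic the proof of the estimate $\E^\times(Q)\ll|Q|^{3-c}$ in Theorem~\ref{t:Q_intr}, interchanging $+$ and $\times$. If $\E^+(Q)\gg|Q|^{3-c'}$ for a sufficiently small $c'>0$, then by the additive Balog--Szemer\'edi--Gowers theorem there is $Q'\subseteq Q$ with $|Q'|\gg|Q|^{1-O(c')}$ and $|Q'+Q'|\ll|Q'|^{1+O(c')}$, so $Q'$ is additively structured; but $Q'\subseteq Q$ is simultaneously a large piece of a \emph{multiplicative} cube, hence multiplicatively structured. A set that is both additively and multiplicatively structured must be small — this is exactly the sum--product input of Theorem~\ref{t:Q_intr} in its symmetric form, and over $\F_p$ it is the incidence mechanism that caps cardinalities at the appropriate power of $p$. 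For $c'$ small enough this contradicts the lower bound on $|Q'|$, which proves \eqref{f:Q_intr2_en}; the range in which the $\F_p$ incidence bound remains non-trivial is precisely $|Q|\le p^{13/23}$. The lower bounds in the ``moreover'' part follow in the same way from the corresponding bounds of Theorem~\ref{t:Q_intr}: the exponents $100/79$, $14/11$, $31/30$ and the truncation at $\sqrt{|Q|\,|\F|}$ are preserved because the underlying sum--product estimates are symmetric in the two operations.

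The main obstacle is to make this dualization genuinely rigorous rather than sloganeering. One has to check that every appeal in the proof of Theorem~\ref{t:Q_intr} to ``$Q$ is an additive cube'' survives the replacement by ``$Q$ is a multiplicative cube'', and that the auxiliary structure produced by the Balog--Szemer\'edi--Gowers step is transposed correspondingly; above all one must ensure that the incidence estimates are really symmetric. Over $\R$ this is automatic, but over $\F_p$ the substitution $+\leftrightarrow\times$ does not act within a single copy of $\F_p$, so some care is needed to apply the bound carrying the $p$-threshold (ultimately Rudnev's point--plane incidence bound, or the relevant sum--product consequence of it) to $Q$ as a subset of $\F_p$ itself, rather than to its image under the index map in the different ring $\Z/(p-1)$. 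Once this bookkeeping is done, the remaining estimates are identical to those in the proof of Theorem~\ref{t:Q_intr}.
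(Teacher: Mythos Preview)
Your core reduction via the logarithm is not the duality you need. The map $\log:(\R_{>0},\times)\to(\R,+)$ is only a group isomorphism: it carries the multiplicative cube $\widehat Q$ to an additive cube $L=\log\widehat Q$ and converts $\E^\times(\widehat Q)$ into $\E^+(L)$, but it does \emph{not} respect addition in $\R$. The quantity $\E^+(\widehat Q)$ that you must bound counts solutions of $e^{l_1}+e^{l_2}=e^{l_3}+e^{l_4}$ with $l_i\in L$, which is neither $\E^+(L)$ nor $\E^\times(L)$; applying Theorem~\ref{t:Q_intr} to $L$ bounds $\E^\times(L)$, i.e.\ $(\log q_1)(\log q_2)=(\log q_3)(\log q_4)$, which is irrelevant. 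The same objection kills the ``moreover'' part: $|L\cdot L|$ tells you nothing about $|\widehat Q+\widehat Q|$. So Theorem~\ref{t:Q_intr2} is not a black-box consequence of Theorem~\ref{t:Q_intr}; one really has to go inside the argument and verify that each incidence ingredient admits a $+\leftrightarrow\times$ swap.

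Over $\R$ the paper does exactly this re-running (Theorem~\ref{t:energy_Q1_m}): Szemer\'edi--Trotter applies to hyperbolas because they form a pseudo-line system, and Theorem~\ref{t:4.5-eps} is stated for both $B(A+C)$ and $BA+C$, so the proof of Theorem~\ref{t:energy_Q1} transfers. Over $\F_p$, however, the symmetry genuinely fails. For the growth bound the Stevens--de~Zeeuw estimate (Theorem~\ref{t:SdZ}) is for lines only, so the paper substitutes the hyperbola incidence of Theorem~\ref{t:sh_Kloosterman}; this is why the exponent is $31/30$, not the $6/5$ of Theorem~\ref{t:Q_intr} --- your claim that ``$31/30$ is preserved by symmetry'' cannot be right, since $31/30$ does not occur in Theorem~\ref{t:Q_intr} at all. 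For the $\F_p$ energy bound (Theorem~\ref{t:sigma_new}) there is no $\F_p$-analogue of Theorem~\ref{t:4.5-eps} to swap into, and the paper takes a different route: after BSG and the multiplicative form of Lemma~\ref{l:Q^(k)} it uses the combinatorial Lemma~\ref{l:Olmezov} together with bounds on $\E^\times_3(B)$ and $\E^\times(B)$ for sets with small sumset from~\cite{collinear}. That step is not ``bookkeeping'' but a genuinely new argument, and your proposal does not supply it.
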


The author is grateful to Jozsef Solymosi for useful discussions.

\section{Definitions and notation}

	Let $\Gr$ be an abelian group.
	Put
	$\E^{+}(A,B)$ for the {\it common additive energy} of two sets $A,B \subseteq \Gr$
	(see, e.g., \cite{TV}), that is, 
	$$
	\E^{+} (A,B) = |\{ (a_1,a_2,b_1,b_2) \in A\times A \times B \times B ~:~ a_1+b_1 = a_2+b_2 \}| \,.
	$$
	If $A=B$, then  we simply write $\E^{+} (A)$ instead of $\E^{+} (A,A)$
	and the quantity $\E^{+} (A)$ is called the {\it additive energy} in this case. 
	More generally, 
	we deal with 
	a higher energy
	\begin{equation}\label{def:T_k_ab}
	\T^{+}_k (A) := |\{ (a_1,\dots,a_k,a'_1,\dots,a'_k) \in A^{2k} ~:~ a_1 + \dots + a_k = a'_1 + \dots + a'_k \}| 
	\,.
	\end{equation}
	Another sort of a higher energy is 
	\[
	\E^{+}_k (A) = |\{ (a_1,\dots,a_k,a'_1,\dots,a'_k) \in A^{2k} ~:~ a_1-a'_1 = \dots = a_k - a'_k \}| \,.
	\]
	Sometimes we  use representation function notations like $r_{A+B} (x)$ or $r_{A+A-B}$, which counts the number of ways $x \in \Gr$ can be expressed as a sum $a+b$ or as a sum $a+a'-b$ with $a,a'\in A$, $b\in B$, respectively. 
	For example, $|A| = r_{A-A}(0)$ and  $\E^{+} (A) = r_{A+A-A-A}(0)=\sum_x r^2_{A+A} (x) = \sum_x r^2_{A-A} (x)$.  
	Having any functions $f_1,\dots, f_{k+1} : \Gr \to \C$ denote by
	$$ \Cf^{+}_{k+1} (f_1,\dots,f_{k+1}) (x_1,\dots, x_{k})$$
	the function
	$$
	    \Cf^{+}_{k+1} (f_1,\dots,f_{k+1}) (x_1,\dots, x_{k}) = \sum_z f_1 (z) f_2 (z+x_1) \dots f_{k+1} (z+x_{k}) \,.
	$$
	For example, $\Cf^{+}_2 (A,B) (x) = r_{B-A} (x)$.
	If $f_1=\dots=f_{k+1}=f$, then write
	$\Cf^{+}_{k+1} (f) (x_1,\dots, x_{k})$ for $\Cf^{+}_{k+1} (f,\dots,f_{}) (x_1,\dots, x_{k})$, where $f$ is taken $k+1$ times.

	If the group operation is the multiplication, then one can define 
	the {\it common additive energy} of two sets $A,B \subseteq \Gr$, namely, $\E^\times (A,B)$, the {\it multiplicative energy} $\E^{\times} (A)$ of $A$,  
	and so on.
	For example, we have   $\E^{\times} (A) = \sum_x r^2_{AA} (x)$.  
	In a similar way we  define $\Cf^{\times}_{k+1} (f_1,\dots,f_{k+1}) (x_1,\dots, x_{k})$ for arbitrary functions $f_1,\dots, f_{k+1} : \Gr \to \C$.

	\bigskip

	Now say a few words about combinatorial cubes.
	Let $h$ be a positive integer,  $a_0\in \Gr$ and $A = \{a_1,\dots,a_d\} \subseteq \Gr$ be a multi--set with $a_j \neq 0$, $j\in [d]$. 
	The {\it combinatorial cube} is the following set 
	\[
	Q_h=Q^A_h := a_0 + (Q^A_h)' = a_0 + \{0,a_1\} + \dots + \{0,a_d\} = \left\{ a_0 + \sum_{j=1}^d \eps_j a_j ~:~ \eps_j \in \{0,1,\dots,h \} \right\} \,.
	\]
	The number $d$ is called the {\it dimension} of $Q_h$ and $h$ is the {\it height} of $Q_h$.
	If $h=1$, then we write just $Q$ for $Q^A_1$. 
	Size of $Q_h$ can vary from $2$ (if all $a_j$ coincide and equal a non--zero element of order two) to $(h+1)^d$.
	In the last case $Q$ is called {\it proper}.   
	Having a set $X\subseteq [d]$ we put
	\[
	Q_h (X) := \left\{ a_0 + \sum_{j=1}^d \eps_j a_j ~:~ \eps_j \neq 0 \implies j\in X  \right\} \subseteq Q_h \,.
	\]
	Thus $Q_h  = Q_h ([d])$. 
	Clearly, if $X\sqcup Y = [d]$, then $Q_h=Q_h (X) +Q_h (Y)$. 
	In particular, $|Q_h | \le |Q_h (X)| |Q_h (Y)|$.
%
%
	Finally, put $U = h\sum_{j=1}^d a_j$. 
	Then $Q'_h = U-Q'_h$ and hence we have the following symmetric relation for any combinatorial cube 
	\begin{equation}\label{f:symm_cube}
	Q_h = (U+2a_0) - Q_h \,.
	\end{equation}

	More generally, having a finite set $\mathcal{D} \subseteq \Gr$, $|\mathcal{D}| \ge 2$, as well as some non--zero elements $a_0,a_1,\dots, a_d \in \Gr$ one can define $Q^A_{\mathcal{D}}$ (it can be associated with a {\it set with missing digits} see, e.g., \cite{Schmidt_digits}) 
\[
	Q_{\mathcal{D}} = Q_{\mathcal{D}}^A = \left\{ a_0 + \sum_{j=1}^d \eps_j a_j ~:~ \eps_j \in \mathcal{D}  \right\} \,.
\] 
	In other words, $Q^A_{\mathcal{D}}  = Q^A_h$ for $\mathcal{D} = \{0,1,\dots, h\}$. 
	Clearly,  $Q^A_{\mathcal{D}}$ does not enjoy property  \eqref{f:symm_cube} but again for $X\sqcup Y = [d]$ one has $Q_{\mathcal{D}}^A (X) + Q_{\mathcal{D}}^A (Y) = Q_{\mathcal{D}}^A ([d])$.

All logarithms are to base $2.$ The signs $\ll$ and $\gg$ are the usual Vinogradov symbols.
If we have a set $A$, then we will write $a \lesssim b$ or $b \gtrsim a$ if $a = O(b \cdot \log^c |A|)$, $c>0$.
When the constants in the signs  depend on a parameter $M$, we write $\ll_M$ and $\gg_M$. 
For a positive integer $n,$ 
let 
$[n]=\{1,\ldots,n\}.$ 
Throughout the paper by  $p$  we always  mean an odd prime number  and we put 
$\F_p = \Z/p\Z$.
If we consider a general field, then we write $\F$ to do not specify either $\F=\R$ or $\F=\F_p$.  

\section{Preliminaries}

Let $q$ be a prime power. 
Also, let $\mathcal{P} \subseteq \F_q^3$ be a set of points  and $\Pi$ be a collection of planes in $\F_q^3$. 
Having $r\in \mathcal{P}$ and $\pi \in \Pi$, we write 
\begin{displaymath}
\I (r,\pi) = \left\{ \begin{array}{ll}
1 & \textrm{if } r\in \pi\\
0 & \textrm{otherwise.}
\end{array} \right.
\end{displaymath}
	Denote by 
	$\mathcal{I}(\mathcal{P}, \Pi) = \sum_{r\in \mathcal{P}} \sum_{\pi \in \Pi} \I (r,\pi)$ the number of incidences between the  points $ \mathcal{P}$  and the planes $\Pi$ 
	and similarly the number $\mathcal{I}(\mathcal{P}, \mathcal{L})$ of incidences between  a collection of points $ \mathcal{P}$  and 
	a family 
	of lines $\mathcal{L}$. 
	The modern form of the points--lines, points--planes incidences for Cartesian products in $\F_p$, see \cite{SdZ}, \cite{Rudnev_pp}, as well as \cite{sh_as}.

\begin{theorem}
	Let $A,B \subseteq \F_p$ be sets, 
	$\mathcal{P} = A\times B$, and $ \mathcal{L}$ be a collection of lines in $\F^2_p$.
	Then 
	\begin{equation}\label{f:line/point_as}
	\mathcal{I}(\mathcal{P}, \mathcal{L}) - \frac{|A| |B| |\mathcal{L}|}{p} \ll |A|^{3/4} |B|^{1/2} |\mathcal{L}|^{3/4} + |\mathcal{L}| + |A| |B| \,.
	\end{equation}
\label{t:SdZ} 
\end{theorem}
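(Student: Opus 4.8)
The plan is to derive \eqref{f:line/point_as} from Rudnev's point--plane incidence bound in $\F_p^3$ (the ``points--planes'' estimate referred to just above, see \cite{Rudnev_pp}), following the scheme of Stevens and de Zeeuw \cite{SdZ}; the statistical main term $\frac{|A||B||\mathcal{L}|}{p}$ appears as in \cite{sh_as}. First I would make the routine reductions: assume $|A|\ls|B|$ and discard the vertical lines from $\mathcal{L}$ --- a line $x=c$ carries $|B|$ incidences if $c\in A$ and none otherwise, so these contribute at most $|A||B|$, which is absorbed by the last term of \eqref{f:line/point_as}. Then I would split the remaining lines dyadically according to their richness $|\ell\cap\mathcal{P}|\in[r,2r)$: with $N_r$ the number of such lines, the scales $r=O(1)$ give $O(|\mathcal{L}|)$ incidences, so it suffices to bound $r N_r$ by the right--hand side of \eqref{f:line/point_as} for each dyadic $r$ and to sum over the $O(\log|\mathcal{P}|)$ relevant scales.

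To estimate $N_r$ I would pass to the dual plane: writing $\ell\in\mathcal{L}$ as $y=s_\ell x+t_\ell$ and $(a,b)\in\mathcal{P}$ as the line $\{(s,t):b=sa+t\}$, an $r$--rich line of $\mathcal{L}$ corresponds to a point of $\widetilde{\mathcal{L}}=\{(s_\ell,t_\ell):\ell\in\mathcal{L}\}$ incident to at least $r$ of the $|A||B|$ ``grid lines'' having slopes in $-A$ and intercepts in $B$. Combining the relation $b=sa+t$ with a second such relation for another point on the same line and eliminating, one is left (after clearing denominators) with a bilinear equation over $\F_p$ whose variables run over $A$, $B$ and the coordinate projections of $\widetilde{\mathcal{L}}$. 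I would then lift: place the grid $A\times B$ (equivalently, $\widetilde{\mathcal{L}}$) onto the quadric $\{z=xy\}\subseteq\F_p^3$ and turn the complementary family into a set of planes, so that the bilinear equation becomes exactly a point--plane incidence, and apply Rudnev's theorem. This bounds the count by $(\#\mathrm{pts})^{1/2}\cdot(\#\mathrm{planes})+k\cdot(\#\mathrm{planes})+\tfrac{(\#\mathrm{pts})\cdot(\#\mathrm{planes})}{p}$, where $k$ is the maximal number of collinear points among the lifted ones.

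Controlling $k$ is the step I expect to be hardest. A line of $\F_p^3$ meeting the lifted set in more than two points must lie on the quadric $\{z=xy\}$, i.e. be one of its two rulings, and these rulings pull back to degenerate sub-families of $\mathcal{L}$: a pencil of lines of $\mathcal{L}$ through a single fixed point, or a large class of mutually parallel lines. Every such family obeys the trivial incidence bound, since a point of $\mathcal{P}$ away from the common point lies on at most one line of the family; hence these families contribute only $O(|\mathcal{P}|+|\mathcal{L}|)$ incidences and can be removed in advance, the loss being absorbed into $|A||B|+|\mathcal{L}|$. Afterwards $k=O(1)$, and substituting this, optimizing the dyadic sum in $r$, and undoing the reductions yields \eqref{f:line/point_as}; the term $\frac{|A||B||\mathcal{L}|}{p}$ is dominant precisely when $|A||B|$ or $|\mathcal{L}|$ has order $p^{2}$. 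The delicate bookkeeping lies in getting the exponents exactly $3/4,\,1/2,\,3/4$: one has to choose correctly which of the two families sits on the quadric, keep track of the interplay between the dyadic optimization and the removal of degenerate families, and check throughout that the range hypotheses of Rudnev's bound are respected.
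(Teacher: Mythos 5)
This statement is quoted in the paper as a known preliminary (it is the Stevens--de Zeeuw incidence bound in the ``asymptotic'' form of \cite{sh_as}); the paper gives no proof, so your proposal can only be measured against the argument in \cite{SdZ}, \cite{Rudnev_pp}. You have correctly identified the right family of ideas --- dualise, reduce to a bilinear equation, and invoke Rudnev's point--plane theorem with the main term $|\mathcal{P}||\Pi|/p$ producing $|A||B||\mathcal{L}|/p$ --- and the routine reductions (vertical lines, dyadic decomposition in the richness $r$) are fine. But two essential steps are wrong or missing. First, your treatment of the collinearity parameter $k$ does not work. The lifted point set on the quadric $\{z=xy\}$ really does contain up to $\max\{|A|,|B|\}$ (or, in the dual picture, up to the maximal multiplicity of a slope or intercept in $\mathcal{L}$) collinear points along each ruling, and you cannot ``remove the degenerate sub--families in advance'' at a cost of $O(|\mathcal{P}|+|\mathcal{L}|)$: there may be $\Theta(|\mathcal{L}|)$ distinct parallel classes and pencils, each individually obeying the trivial bound but collectively contributing up to $\Theta(|\mathcal{L}||\mathcal{P}|/K)$ incidences, and every line lies in many such families, so there is no well--defined set to delete. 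In the genuine argument one keeps $k=\max\{|A|,|B|\}$, and the term $k|\Pi|$ of Theorem \ref{t:Misha+} is what eventually produces the secondary terms of \eqref{f:line/point_as}; setting $k=O(1)$ silently deletes a term that is not negligible.

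Second, the exponents $3/4,1/2,3/4$ cannot emerge from the scheme as you describe it. One application of Cauchy--Schwarz followed by Rudnev's bound controls the number of $r$-rich lines by (essentially) the number of collinear triples of $A\times B$, i.e.\ gives $N_r\ll |A|^2|B|^2/r^3+\dots$, and the dyadic summation then yields the \emph{symmetric} bound $(|A||B|)^{2/3}|\mathcal{L}|^{2/3}$ (or $(|A||B|)^{5/6}|\mathcal{L}|^{2/3}$ if the collinear--triple count is lossier). To reach $|A|^{3/4}|B|^{1/2}|\mathcal{L}|^{3/4}$ one needs, in effect, a rich--line bound of the shape $N_r\ll |A|^3|B|^2/r^4$, and obtaining this asymmetric gain is precisely the new idea of Stevens and de Zeeuw; your ``eliminate and clear denominators to get a bilinear equation'' step, as written, collapses either to a two--dimensional point set (degenerate for Rudnev) or to the symmetric count. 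Deferring this to ``delicate bookkeeping'' defers the entire theorem. A related smaller point: since the right--hand side of \eqref{f:line/point_as} is not symmetric in $A$ and $B$, ``assume $|A|\ls|B|$'' is not a harmless normalisation --- it is a hypothesis (present in \cite{SdZ}) that changes what is being proved.
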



\begin{theorem}
	Let $p$ be an odd prime, $\mathcal{P} \subseteq \F_p^3$ be a set of points and $\Pi$ be a collection of planes in $\F_p^3$. 
	Suppose that $|\mathcal{P}| \le |\Pi|$ and that $k$ is the maximum number of collinear points in $\mathcal{P}$. 
	Then the number of point--planes incidences satisfies 
	\begin{equation}\label{f:Misha+_a}
	\mathcal{I} (\mathcal{P}, \Pi)  - \frac{|\mathcal{P}| |\Pi|}{p} \ll |\mathcal{P}|^{1/2} |\Pi| + k |\Pi| \,.	
	\end{equation}
	\label{t:Misha+}	
\end{theorem}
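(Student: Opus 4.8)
This is the point--plane incidence theorem of Rudnev \cite{Rudnev_pp}, and the plan is to reproduce the structure of his argument. The starting observation is that a point--plane incidence in $\F_p^3$ is a single bilinear equation: writing a non--vertical plane of $\Pi$ as $\{(x_1,x_2,x_3):x_3=\beta_1x_1+\beta_2x_2+\beta_3\}$ and a point of $\mathcal P$ as $(\alpha_1,\alpha_2,\alpha_3)$, the incidence reads $\alpha_3=\beta_1\alpha_1+\beta_2\alpha_2+\beta_3$, which is bilinear in $(\alpha_1,\alpha_2,1,\alpha_3)$ and $(\beta_1,\beta_2,\beta_3,-1)$. The first step is therefore to attach to each point $a\in\mathcal P$ a line $\lambda_a$ and to each plane $\pi\in\Pi$ a line $\mu_\pi$ in projective $3$--space over $\F_p$ so that $a\in\pi$ precisely when $\lambda_a$ and $\mu_\pi$ meet; such a correspondence is available exactly because the incidence relation is a non--degenerate bilinear form, and the defining data of $\lambda_a,\mu_\pi$ are read off from $2\times2$ minors of the relevant matrices. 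The vertical planes discarded here, together with points whose image lines degenerate, form a configuration of lower complexity --- essentially a two--dimensional point--line incidence problem --- and are absorbed into the error term. After this step, $\I(\mathcal P,\Pi)$ is, up to lower order, the number of intersecting pairs $(\lambda_a,\mu_\pi)$ inside a family $\mathcal L$ of at most $|\mathcal P|+|\Pi|$ lines.

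Next I would invoke the Klein (Pl\"ucker) correspondence: the lines of projective $3$--space over $\F_p$ are identified with the points of a non--degenerate quadric $\mathcal K\subseteq\mathbb P^5(\F_p)$, carrying a symmetric bilinear form $B$, in such a way that two lines meet if and only if their images $X,Y\in\mathcal K$ satisfy $B(X,Y)=0$, i.e.\ $Y$ lies on the tangent hyperplane $T_X\mathcal K$. Writing $\mathcal A$ and $\mathcal B$ for the Pl\"ucker images of the $\lambda_a$ and of the $\mu_\pi$, the task becomes to bound
\[
\sum_{X\in\mathcal A}\bigl|\,\mathcal B\cap T_X\mathcal K\,\bigr|\,.
\]
The relevant geometry of $\mathcal K$ is: each slice $\mathcal K\cap T_X\mathcal K$ is a quadric cone with vertex $X$, swept out by the two rulings of planes of $\mathcal K$ through $X$; and a line lying inside $\mathcal K$ corresponds to a pencil of lines in $3$--space. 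One then checks that a pencil formed by the $\lambda_a$ forces the underlying points of $\mathcal P$ to be collinear, so that the hypothesis ``no more than $k$ points of $\mathcal P$ are collinear'' becomes the statement that no line of $\mathcal K$ carries more than $\asymp k$ of the points of $\mathcal A$.

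Finally I would estimate the displayed sum by isolating a main term and a concentrated part. The equidistributed contribution --- a generic tangent hyperplane meeting $\mathcal B\subseteq\mathcal K$ in roughly a $1/p$--proportion --- produces the term $|\mathcal P||\Pi|/p$, and a Cauchy--Schwarz / second--moment estimate exploiting the non--degeneracy of $\mathcal K$ controls the fluctuation at the level $|\mathcal P|^{1/2}|\Pi|$; the concentrated part, where points of $\mathcal B$ pile up on the ruling planes of the cones and hence on lines of $\mathcal K$, is bounded using the $k$--richness statement above and contributes $k|\Pi|$. Transferring back through the Pl\"ucker and line--line dictionaries then yields \eqref{f:Misha+_a}. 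The hard part is not any single estimate but precisely the bookkeeping in between: showing that the discarded vertical and degenerate configurations cost only lower order, checking that ``collinear in $\mathcal P$'' is exactly the combinatorial invariant governing richness on $\mathcal K$, and carrying the quadric count out sharply enough to land on $|\mathcal P|^{1/2}|\Pi|+k|\Pi|$ rather than a weaker exponent --- this is the technical core of \cite{Rudnev_pp} (see also \cite{sh_as}).
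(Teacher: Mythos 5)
This theorem is quoted in the paper as a known preliminary --- it is Rudnev's point--plane incidence theorem \cite{Rudnev_pp}, stated in the asymptotic form of \cite{sh_as} --- and no proof is given in the text, so there is no in--paper argument to compare yours against. Your outline does faithfully reproduce the architecture of Rudnev's proof: the reduction of the bilinear incidence relation to counting intersecting pairs of lines in projective $3$--space over $\F_p$, the Pl\"ucker--Klein embedding turning this into counting points of $\mathcal{B}$ on tangent hyperplane sections of the Klein quadric, the identification of collinear families in $\mathcal{P}$ with points of $\mathcal{A}$ concentrated on lines of the quadric, and the split into main term, fluctuation, and concentrated part.

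As a proof, however, your text is a roadmap rather than an argument: every quantitative step is named and then deferred (``one then checks'', ``a Cauchy--Schwarz estimate controls'', ``is bounded using''), and you concede as much in your final sentence. Two concrete gaps to be aware of if you intend to fill it in. First, the version stated here subtracts the term $|\mathcal{P}||\Pi|/p$ and carries no hypothesis of the form $|\mathcal{P}|\ll p^2$; Rudnev's original argument needs such a hypothesis to control the contribution of lines and planes contained in the Klein quadric, and removing it --- equivalently, isolating the $1/p$ main term --- is precisely the content of the refinement in \cite{sh_as}, which your sketch does not address. Second, the decisive estimate is not a generic ``second moment on a quadric'': the exponent $1/2$ in $|\mathcal{P}|^{1/2}|\Pi|$ emerges from a careful count of rich planes and conics inside the tangent--cone sections, and landing on that exponent is the whole point of the theorem. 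So the proposal correctly locates the proof and describes its shape, but it does not constitute an independent proof of the statement.
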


We formulate  the best current result on the sum--product phenomenon in $\F_p$, see \cite[Theorem 1.2]{RSS} in a convenient way for us.

\begin{theorem}
	Let $A \subseteq \F_p$, $\la\neq 0$ and $|AA|=M|A|$, $|(A+\la)(A+\la)| = K|A|$. 
	If $|A| \le p^{36/67}$, then $\max\{K,M\} \gtrsim |A|^{2/9}$.
	The same is true if one replaces the multiplication to the division and vice versa.  
\label{t:RSS_2/9}	
\end{theorem}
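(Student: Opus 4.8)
The plan is to reduce the statement to an incidence estimate by an Elekes-type construction, proving the weaker exponent $1/5$ first and then indicating the refinement that yields $2/9$. First I would normalise. We may assume $M\le K$, since the problem is symmetric under $A\leftrightarrow A+\la$, $\la\leftrightarrow-\la$, which interchanges $M$ and $K$. Replacing $A$ by $A/\la$ multiplies $AA$ by a unit and turns $(A+\la)(A+\la)$ into $\la^{2}(A/\la+1)(A/\la+1)$, so both cardinalities are unchanged and one may take $\la=1$; deleting the at most two elements $0,-1$ from $A$ changes $|A|$ by $O(1)$, so one may also assume $0,-1\notin A$. Write $N=|A|$, $B=A+1$ (so $|B|=N$, $|AA|=MN$, $|BB|=KN$) and $D=\max\{K,M\}$. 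The goal is $D\gtrsim N^{2/9}$, equivalently $\max\{|AA|,|BB|\}\gtrsim N^{11/9}$.

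Next I would set up the incidence configuration. For $a,b\in A$ let $\ell_{a,b}$ be the line $ay=(b+1)x+a(b+1)$; these $N^{2}$ lines are pairwise distinct (the intercept $b+1$ recovers $b$, then the slope $(b+1)/a$ recovers $a$). For every $t\in A$ the point $(at,(b+1)(t+1))$ lies on $\ell_{a,b}$ and inside the Cartesian product $\mathcal P=AA\times BB$, because $at\in AA$ and $(b+1)(t+1)\in(A+1)(A+1)=BB$. Hence each $\ell_{a,b}$ carries $\ge N-O(1)$ points of $\mathcal P$, so $\mathcal I(\mathcal P,\mathcal L)\gg N^{3}$ for $\mathcal L=\{\ell_{a,b}\}$. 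Feeding this into Theorem~\ref{t:SdZ} (with $BB$, the larger factor, in the $1/2$-exponent slot) gives
\[
	N^{3}-\frac{MKN^{4}}{p}\ll M^{3/4}K^{1/2}N^{11/4}+N^{2}+MKN^{2}\,.
\]
If the last term or the $p$-term dominated the left side we would already get $D\gg N^{1/2}$ or $MKN\gg p$ (the latter excluded once $N\le p^{5/7}$, hence a fortiori once $N\le p^{36/67}$); otherwise $N^{3}\ll M^{3/4}K^{1/2}N^{11/4}$, which yields $D\gg N^{1/5}$.

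To sharpen $1/5$ to $2/9$ I would replace the planar bound of Theorem~\ref{t:SdZ} by the point--plane bound of Theorem~\ref{t:Misha+}, applied to the three-dimensional configuration obtained by recording a \emph{pair} of the incidences above simultaneously — equivalently, by working with the third multiplicative energies of $A$ and of $B$ rather than the second ones. One pigeonholes to a popular product value to make the relevant point set (a subset of $AA\times BB$, lifted to $\F_p^{3}$) as structured as possible, counts rich planes through it, and optimises over the popularity threshold; the exponent $1/9$ falls out of that optimisation. The hypothesis $|A|\le p^{36/67}$ enters precisely here: it is the range in which the genuine incidence term of \eqref{f:Misha+_a} dominates $\frac{|\mathcal P||\Pi|}{p}$ and in which the maximal number $k$ of collinear points of the point set is $\ll N^{1+o(1)}$, so that the term $k|\Pi|$ is absorbed. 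The division statement is entirely parallel: use the parametrisation $t\mapsto(t/a,(t+1)/(b+1))$, the grid $(A/A)\times((A+1)/(A+1))$, and the rigidity that $a_{1}/a_{2}=a_{3}/a_{4}$ together with $(a_{1}+1)/(a_{2}+1)=(a_{3}+1)/(a_{4}+1)$ forces $a_{1}a_{4}=a_{2}a_{3}$ and $a_{1}+a_{4}=a_{2}+a_{3}$.

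I expect the three-dimensional lift to be the main obstacle: the auxiliary configuration must be chosen so that (a) a genuine power saving in $MK$ survives the Cauchy--Schwarz step, (b) the underlying point set is either a Cartesian product, or a bounded union of dilates of one, or otherwise poor in collinear points, and (c) the parameter $k$ is controlled — bounding $k$ is itself a small sum-product statement and is exactly where $N\le p^{36/67}$ must be invoked. Balancing the three terms of \eqref{f:Misha+_a} (the $p$-term, the $k$-term, and the incidence term) against the energy lower bound is the technical heart of the argument; the rest is bookkeeping of popular level sets.
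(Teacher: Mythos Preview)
This theorem is not proved in the paper: it is stated in the Preliminaries as a reformulation of \cite[Theorem~1.2]{RSS} ``in a convenient way for us'', with no argument supplied. So there is no proof in the paper to compare your proposal against.

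Assessing the proposal on its own merits: the Elekes--style incidence setup is correct, the lines $\ell_{a,b}$ are genuinely distinct, each carries $\ge N$ points of $AA\times BB$, and your derivation of the exponent $1/5$ via Theorem~\ref{t:SdZ} is sound (the side conditions on the $p$--term are handled correctly under $N\le p^{36/67}$). The passage from $1/5$ to $2/9$, however, is only a sketch. You correctly identify Rudnev's point--plane theorem (Theorem~\ref{t:Misha+}) and higher energies as the right ingredients --- this is indeed the engine of the argument in \cite{RSS} --- but the actual proof there is considerably more intricate than ``record a pair of incidences and optimise over a popularity threshold''. It runs through specific interlocking energy inequalities (bounds on $\E_3^\times$ and related quantities in terms of doubling constants), a non--obvious choice of which quantities to couple in the H\"older step, and a parameter optimisation from which $2/9$ emerges only after the full chain is in place. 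Your outline does not specify the three--dimensional point and plane sets, does not verify the collinearity bound on $k$, and does not perform the arithmetic that produces $2/9$ rather than some weaker exponent. As written, the first half is a complete proof of the weaker $1/5$ statement; the second half is a plausible roadmap but not a proof, and the gap is substantive --- filling it in is essentially the content of \cite{RSS}.
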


Using growth in the affine group it was proved in \cite[Theorem 9, Lemma 21]{RSh} (the authors consider the case $A=B=C$ only but the arguments work in general case as well) that

\begin{theorem}
	Let $A,B,C \subseteq \R$ be finite sets, and $\kappa >0$ be any real.
	Suppose that $|C|^\kappa \le |B| \le |C|^2$. 
	Then there is $\d=\d(\kappa) > 0$ such that 
	\[
	\sum_{x} r^2_{B(A+C)} (x),\,~ \sum_{x} r^2_{BA+C} (x)  \ll |A|^{4/3} |B|^{3/2-\d} |C|^{5/3} \,. 
	\]
	\label{t:4.5-eps} 
\end{theorem}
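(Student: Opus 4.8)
Since this is the result of \cite{RSh}, recorded here for later use, I only sketch how one would arrive at it. The plan is to recast both energies as ``intersection energies'' in the affine group $\mathrm{Aff}(\R)$ of maps $x\mapsto \b x+\g$, and then to balance the Szemer\'edi--Trotter bound (over $\R$) against growth in $\mathrm{Aff}(\R)$.

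First I would pass to the affine group. For $b\in B$, $c\in C$ let $g_{b,c}$ be the map $x\mapsto b(x+c)$ and put $G=\{g_{b,c}\}$, a multiset of size $|B||C|$; a short computation gives
\[
	\sum_x r^2_{B(A+C)}(x) \;=\; \sum_{g_1,g_2\in G} |A\cap (g_1^{-1}g_2)A| \;=\; \sum_{h\in \mathrm{Aff}(\R)} r_{G^{-1}G}(h)\,|A\cap hA| \,,
\]
and the analogous identity holds for $\sum_x r^2_{BA+C}(x)$ with the family $g_{b,c}\colon x\mapsto bx+c$, so it is enough to treat one of the two sums. By dyadic pigeonholing the main contribution comes from a single level: there are $\D,M\ge 1$ such that it suffices to bound $\D^2 M\cdot |\{h:\ |A\cap hA|\sim \D,\ r_{G^{-1}G}(h)\sim M\}|$, and I would split the relevant $h$ according to whether $M$ is small (the ``generic'' maps) or large (the ``rich'' maps).

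For the generic maps one invokes the Szemer\'edi--Trotter theorem over $\R$ applied to the point set $A\times A$ and the lines dual to the $h$: the map $x\mapsto \l x+t$ is dual to $\{(x,y):y=\l x+t\}$, which passes through $(a_1,a_2)\in A\times A$ exactly when $a_2=\l a_1+t$, so $|A\cap hA|$ is precisely the number of points of $A\times A$ on that line. The number of distinct lines is controlled by $|B/B|$ and the sizes of the dilates $\l C-C$, and smallness of $M$ keeps the weighted incidence count in hand; this step is what produces the Szemer\'edi--Trotter exponents $4/3$ and $5/3$ in the final bound.

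The rich part is the main obstacle. The maps $h$ with large $r_{G^{-1}G}(h)$ form a set $H\subseteq\mathrm{Aff}(\R)$ of abnormally large multiplicative energy, $\sum_h r^2_{G^{-1}G}(h)=\E^\times(G)$, and the structure theory of the affine group forces such an $H$ either to grow --- in which case one bootstraps the desired bound by combining the Pl\"unnecke--Ruzsa inequality with the Balog--Szemer\'edi--Gowers theorem --- or else to be concentrated near an abelian subgroup (the dilations fixing a common point, or the translations), in which case one extracts a multiplicatively or additively structured piece of $B$ or of $C$ and argues directly. The hypothesis $|C|^\k\le|B|\le|C|^2$ keeps $B$ and $C$ polynomially comparable, which is exactly what rules out the degenerate configurations and makes the gains in the two regimes uniform; balancing them yields the power saving $\d=\d(\k)>0$. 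Combining the generic and rich estimates gives $\sum_x r^2_{B(A+C)}(x)\ll |A|^{4/3}|B|^{3/2-\d}|C|^{5/3}$, and the identical argument with $x\mapsto bx+c$ gives the same bound for $\sum_x r^2_{BA+C}(x)$.
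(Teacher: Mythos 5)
The paper does not prove this theorem: it is a quoted preliminary, attributed to \cite[Theorem 9, Lemma 21]{RSh} with only the remark that it follows from growth in the affine group, so there is no in-paper argument to compare your proposal against. Your attribution is correct and your sketch --- recasting both energies as $\sum_h r_{G^{-1}G}(h)\,|A\cap hA|$ over a family of affine maps, Szemer\'edi--Trotter on $A\times A$ with the dual lines for the generic maps, and energy/growth in $\mathrm{Aff}(\R)$ for the rich ones --- faithfully outlines the strategy of that reference, though the entire power saving $\d(\k)$ lives in the affine-group energy bound that you leave as a black box, so this is an accurate citation and outline rather than a self-contained proof, which is precisely the status the statement has in the paper itself.
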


Applying growth in the modular group we have obtained \cite[Theorem 1]{sh_Kloosterman}.

\begin{theorem}
	Let $A,B,C,D\subseteq \F_p$ be sets.
Then for any $\la \neq 0$, one has 
\[
|\{ (a,b,c,d) \in A\times B \times C \times D ~:~ (a+b) (c+d) = \lambda \}| - \frac{|A||B||C||D|}{p} 
\lesssim 
\]
\begin{equation*}\label{f:hyp_incidences_intr}
\lesssim |A|^{1/4} |B||C| |D|^{1/2} + |A|^{3/4} (|B||C|)^{41/48} |D|^{1/2}   \,.
\end{equation*}
\label{t:sh_Kloosterman} 
\end{theorem}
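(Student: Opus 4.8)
The plan is to isolate the main term $|A||B||C||D|/p$ by a Fourier expansion and then bound the remaining error, which is a bilinear form of Kloosterman sums, by incidence geometry --- in the spirit of Rudnev's point--plane bound (Theorem~\ref{t:Misha+}) and the Stevens--de Zeeuw point--line bound (Theorem~\ref{t:SdZ}); in invariant language, one estimates the growth of the naturally associated family of M\"obius maps in $\mathrm{PGL}_2(\F_p)$. Using $\mathbf{1}[x=\lambda]=p^{-1}\sum_{t}e_p(t(x-\lambda))$ with $e_p(y)=e^{2\pi i y/p}$ and $x=(a+b)(c+d)$, the contribution of $t=0$ is exactly $|A||B||C||D|/p$. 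Equivalently, the count equals $\sum_{s\ne 0}r_{A+B}(s)\,r_{C+D}(\lambda s^{-1})$, and subtracting the averages $|A||B|/p$, $|C||D|/p$ of $r_{A+B}$, $r_{C+D}$ reduces the problem to bounding $\big|\sum_{s\ne 0}f(s)\,g(\lambda s^{-1})\big|$ with $f=r_{A+B}-|A||B|/p$ and $g=r_{C+D}-|C||D|/p$ of mean zero; a dyadic decomposition of $f$ and $g$ by the level sets of the representation functions produces the logarithmic factors hidden in $\lesssim$ and singles out the distinguished roles played by $A$ and $D$.

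The heart of the matter is the following reformulation. Fixing $b\in B$, $c\in C$, the equation $(a+b)(c+d)=\lambda$ in the remaining variables is $ad+ac+bd+bc-\lambda=0$, so, lifting $A\times D$ to the surface $Z=XY$ via $(a,d)\mapsto(a,d,ad)$, it asserts precisely that $(a,d,ad)$ lies on the plane $\{Z+cX+bY+(bc-\lambda)=0\}$ labelled by $(b,c)$. Hence the count equals $\mathcal I(\mathcal P,\Pi)$ with $\mathcal P=\{(a,d,ad):a\in A,\ d\in D\}$ and $\Pi$ a family of at most $|B||C|$ planes, and Theorem~\ref{t:Misha+} (applied with the roles of points and planes arranged so that the smaller side consists of points) already reproduces the main term $|\mathcal P||\Pi|/p=|A||B||C||D|/p$; the collinearity parameter is $k\le\max(|A|,|D|)$, since the only lines carrying many points of $\mathcal P$ are the rulings $X=\mathrm{const}$, $Y=\mathrm{const}$ of $Z=XY$. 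When the four sets are comparable this error term is too weak, and one improves it by a Cauchy--Schwarz step combined with a second incidence estimate: after squaring one is led to count incidences of $\mathcal P$ with the lines $\pi\cap\pi'$, $\pi,\pi'\in\Pi$, and here two facts are exploited --- that such a line lies in at most two planes of $\Pi$ unless it is a ruling of the quadric dual to $\Pi$, of which there are few (this controls the multiplicities), and that $\mathcal P$ is a Cartesian product lifted to $Z=XY$, so the point--line bound of Theorem~\ref{t:SdZ} applies in the plane after projecting out a coordinate (with $|A|$ entering to the power $3/4$ and $|D|$ to the power $1/2$). Combining these inputs, feeding them back through the Cauchy--Schwarz, and optimising produces, after bookkeeping, the two error terms $|A|^{1/4}|B||C||D|^{1/2}$ and $|A|^{3/4}(|B||C|)^{41/48}|D|^{1/2}$. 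In invariant terms this records that the set of M\"obius maps $g_{a,d}\colon b\mapsto(-db+\lambda-da)/(b+a)$, $a\in A$, $d\in D$ --- all of determinant $-\lambda$ --- has controlled energy in $\mathrm{PGL}_2(\F_p)$, which is precisely what growth in the modular group provides.

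The main obstacle is the bookkeeping rather than the ideas. One must (a) classify the lines $\pi\cap\pi'$ according to how many planes of $\Pi$ they meet --- the generic answer being $2$, with the exceptional rich lines forming the rulings of the dual quadric $x_3x_4+\lambda x_3^2-x_1x_2=0$ --- and decompose the line family dyadically so that Theorem~\ref{t:SdZ} can be applied level by level; (b) verify that the lifted point and line sets really are bounded refinements of Cartesian products and that the degenerate contributions (vanishing of $a+b$ or $c+d$, non-invertible matrices, coincident lines or M\"obius maps) are of strictly smaller order --- this is where the hypothesis $\lambda\ne 0$ is used; and (c) carry through the optimisation against the trivial bound $\min(|A||B||C|,|A||B||D|,|A||C||D|,|B||C||D|)$ and against the rich-line term to arrive at the precise exponents $\tfrac14$, $\tfrac34$ and $\tfrac{41}{48}$. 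The conceptual ingredients --- the lift to $Z=XY$, the Cauchy--Schwarz, and the incidence theorems (equivalently, $\mathrm{PGL}_2$-growth) --- are by now standard; essentially all of the effort goes into threading the numerology so that the final bound is non-trivial in a useful range of parameters.
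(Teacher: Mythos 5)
This theorem is not proved in the paper at all: it is quoted verbatim as \cite[Theorem 1]{sh_Kloosterman}, an external black box whose proof (as the surrounding text says) goes through growth in the modular group, i.e.\ energy estimates for families of M\"obius transformations in $\mathrm{SL}_2(\F_p)$. So there is no in-paper argument to compare yours against, and the right question is whether your reconstruction stands on its own. It does not, for the following reason.

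Your write-up is a strategy outline, not a proof, and the gap sits exactly where the content of the theorem lies. The statement is nothing but the two exponent patterns $|A|^{1/4}|B||C||D|^{1/2}$ and $|A|^{3/4}(|B||C|)^{41/48}|D|^{1/2}$; everything weaker (e.g.\ the $|\mathcal P|^{1/2}|\Pi| + k|\Pi|$ output of a single application of Theorem~\ref{t:Misha+} to the lift $(a,d)\mapsto(a,d,ad)$, which for comparable sets gives only the trivial $N^3$) is already standard. At the decisive moment you write that combining a Cauchy--Schwarz step, a classification of the lines $\pi\cap\pi'$, and Theorem~\ref{t:SdZ} ``produces, after bookkeeping,'' the exponent $41/48$. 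No derivation is offered, and it is not credible that this is mere bookkeeping: the unproved inputs include (i) the claim that a line meets at most two planes of $\Pi$ unless it is a ruling of a specific dual quadric, together with a count of such rulings and of the points of $\mathcal P$ they can carry; (ii) the claim that after projection the relevant line family is a bounded refinement of a Cartesian product, which is what Theorem~\ref{t:SdZ} (a statement about point sets $A\times B$, with no Cartesian hypothesis on the lines, but whose useful application here requires controlling the line multiplicities you have not controlled); and (iii) the optimisation itself. Moreover, your parenthetical translation into ``controlled energy of the M\"obius maps $g_{a,d}$ in $\mathrm{PGL}_2(\F_p)$'' is asserted as equivalent to the incidence computation, but that equivalence is precisely the nontrivial content of the $\mathrm{SL}_2$-growth machinery of \cite{RSh} on which the cited proof actually relies; you cannot both invoke it as the source of $41/48$ and claim to rederive it from Theorems~\ref{t:SdZ} and~\ref{t:Misha+} without exhibiting the computation. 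Also note a smaller but real issue: the bound is claimed for \emph{all} set sizes (there is no hypothesis such as $|A|\le p^{\alpha}$), so any application of Theorem~\ref{t:Misha+} must respect its hypothesis $|\mathcal P|\le|\Pi|$ and the collinearity term $k|\Pi|$ must be absorbed; with $k\le\max(|A|,|D|)$ the term $\max(|A|,|D|)\,|B||C|$ is \emph{not} dominated by the stated right-hand side in general (take $|D|$ small), so the roles of the four sets in your lift cannot be assigned as casually as you do. As it stands the proposal identifies a plausible family of tools but proves nothing beyond what a single use of Theorem~\ref{t:Misha+} gives.
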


We finish this incidences part of section Preliminaries  by  the famous Szemer\'edi--Trotter Theorem \cite{sz-t}. 
Recall that  a set $\mathcal{L}$ of continuous plane curves a {\it pseudo--line system} if any two members of $\mathcal{L}$ have at
most one point in common. 

\begin{theorem}\label{t:SzT}
	Let $\mathcal{P}$ be a set of points and let $\mathcal{L}$ be a set of pseudo--lines in $\R^2$. 
	Then
	$$
	\mathcal{I}(\mathcal{P},\mathcal{L}) 
	\ll |\mathcal{P}|^{2/3}|\mathcal{L}|^{2/3}+|\mathcal{P}|+|\mathcal{L}|\,.$$
\end{theorem}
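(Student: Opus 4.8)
The plan is to prove the incidence bound $\mathcal{I}(\mathcal{P},\mathcal{L}) \ll |\mathcal{P}|^{2/3}|\mathcal{L}|^{2/3} + |\mathcal{P}| + |\mathcal{L}|$ via the crossing number inequality, which is the cleanest route and works verbatim for pseudo-line systems. First I would recall the crossing lemma: any simple graph $G$ drawn in the plane with $v$ vertices and $e \ge 4v$ edges has crossing number $\mathrm{cr}(G) \gg e^3/v^2$. This is proved by taking the obvious bound $\mathrm{cr}(G) \ge e - 3v$ from Euler's formula (a planar graph on $v$ vertices has at most $3v-6$ edges, so each of the last $e-(3v-6)$ edges must be involved in a crossing), and then amplifying it by a standard probabilistic argument: pass to a random induced subgraph keeping each vertex independently with probability $p$, apply the linear bound to the subgraph, take expectations, and optimize $p \sim v/e$.

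Next I would set up the incidence graph on the point set $\mathcal{P}$. Discard all lines incident to $\le 1$ point of $\mathcal{P}$ (there are at most $|\mathcal{L}|$ of these, contributing $O(|\mathcal{L}|)$ incidences), and for each remaining line $\ell$ meeting $\mathcal{P}$ in points $p_1,\dots,p_{k_\ell}$ (ordered along $\ell$), draw the $k_\ell - 1$ arcs $p_1 p_2, p_2 p_3, \dots$ along $\ell$ as edges of a graph $G$. Here the pseudo-line hypothesis enters in exactly one place: two distinct members of $\mathcal{L}$ cross at most once, so a crossing between an edge on $\ell$ and an edge on $\ell'$ can occur at most once per pair $(\ell,\ell')$, giving $\mathrm{cr}(G) \le \binom{|\mathcal{L}|}{2} < |\mathcal{L}|^2$. (If the drawing is not simple one applies the crossing lemma in its multigraph form, or first deletes $O(|\mathcal{L}|)$ edges to make it simple; either way the bound is unaffected up to constants.) The graph has $v = |\mathcal{P}|$ vertices and $e = \sum_{\ell} (k_\ell - 1) \ge \mathcal{I}' - |\mathcal{L}|$ edges, where $\mathcal{I}'$ is the number of incidences with the surviving lines.

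Finally I would combine the two estimates. If $e < 4v$, i.e.\ $\mathcal{I}' \ll |\mathcal{P}| + |\mathcal{L}|$, we are done. Otherwise the crossing lemma gives $e^3/v^2 \ll \mathrm{cr}(G) \le |\mathcal{L}|^2$, hence $e \ll |\mathcal{P}|^{2/3}|\mathcal{L}|^{2/3}$, so $\mathcal{I}' \ll |\mathcal{P}|^{2/3}|\mathcal{L}|^{2/3} + |\mathcal{L}|$; adding back the $O(|\mathcal{L}|)$ incidences from the discarded lines yields the claim. The only genuinely delicate point is the verification that the pseudo-line axiom (at most one common point) is precisely what bounds the crossing number by $\binom{|\mathcal{L}|}{2}$ — everything else is the standard crossing-number proof, which never uses that the curves are straight. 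An alternative would be the Clarkson--Edelsbrunner--Guibas--Sharir--Welzl cutting argument, but the crossing-number proof is shorter and I would present that one.
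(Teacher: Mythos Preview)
Your crossing-number argument is correct and is the standard modern proof (due to Sz\'ekely) of the Szemer\'edi--Trotter bound in the pseudo-line setting. However, the paper does not prove this theorem at all: it is stated in the Preliminaries section as a known result, with a citation to the original Szemer\'edi--Trotter paper \cite{sz-t}, and is then used as a black box in later arguments. So there is no ``paper's own proof'' to compare against; your write-up would simply supply a proof where the paper gives only a reference.
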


	The next result is essentially contained in \cite[Lemma 10]{s_sumsets} and also see the proof of \cite[Theorem 3]{s_sumsets}.

\begin{theorem}
	Let $A \subseteq \Gr$ be a set. 
	Suppose there are parameters $D_1$, $D_2$ such that $\E_3 (A) \le D_1 |A|^3$ and for any set $B \subseteq \Gr$ one has 
\[
	\E(A,B) \le D_2 |A| |B|^{3/2} \,.
\] 
	Then 
\[
	|A+A| \gtrsim |A|^{58/37} D^{-16/37}_1 D^{-10/37}_2 \,,
\]
	and 
\[
	|A-A| \gtrsim |A|^{8/5} (D_1 D_2)^{-2/5} \,.
\]
\label{t:s_sumsets}
\end{theorem}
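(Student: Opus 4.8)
Both lower bounds are consequences of the higher--energy machinery of \cite{s_sumsets}: granting the two hypotheses, the estimate for $|A+A|$ is \cite[Lemma 10]{s_sumsets} and the one for $|A-A|$ follows the lines of the proof of \cite[Theorem 3]{s_sumsets}. I sketch how that machinery runs. Note first that $\E^{+}(A,B)\le D_2|A||B|^{3/2}$ is assumed for \emph{every} set $B$, so it may be fed any auxiliary set the argument produces; note also that individually the hypotheses are not enough --- from $\E_3(A)\le D_1|A|^3$ one gets $\E^{+}(A)\le D_1^{1/2}|A|^{5/2}$, hence $|A\pm A|\ge|A|^4/\E^{+}(A)\ge|A|^{3/2}D_1^{-1/2}$, and taking $B=A$ in the second hypothesis gives the same $|A|^{3/2}$.

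To extract the genuine saving I would work with the positive--semidefinite Gram--type matrix $M=(r_{A-A}(a-a'))_{a,a'\in A}$, for which $\operatorname{tr} M=|A|^2$, $\operatorname{tr} M^2=\E_3(A)$ and $\langle M\mathbb{1}_B,\mathbb{1}_B\rangle=\E^{+}(A,B)$ for $B\subseteq A$. The first hypothesis caps $\operatorname{tr} M^2\le D_1|A|^3$ (controlling the $\ell^2$--mass of the eigenvalues), while the second --- applied to the indicator sets of the large eigenvectors of $M$ and to the popular--difference sets $P_\Delta=\{x:r_{A-A}(x)\ge\Delta\}$, whose cardinalities the first hypothesis bounds via $\Delta^{3}|P_\Delta|\le\E_3(A)$ --- limits how far the mass of the large eigenvalues can be concentrated on small sets. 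Decomposing $A-A$ dyadically by the size of $r_{A-A}$, bounding each level by these two competing constraints, summing, and optimising the threshold $\Delta$ then yields a bound on $\E^{+}(A)$ (and on related truncated energies) that is polynomially better than $|A|^{5/2}$, of exactly the strength required.

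Feeding this into $|A+A|\ge|A|^4/\E^{+}(A)$ and cleaning up with a Ruzsa--covering / Pl\"unnecke--Ruzsa step (the reason the exponent settles at the awkward value $58/37$ rather than something simpler) gives $|A+A|\gtrsim|A|^{58/37}D_1^{-16/37}D_2^{-10/37}$. For the difference set one runs a more efficient variant: the symmetry $A-A=-(A-A)$ makes the relevant Cauchy--Schwarz steps lossless and permits a Pl\"unnecke/Ruzsa argument directly on the symmetric set $A-A$, which improves the exponent to $|A-A|\gtrsim|A|^{8/5}(D_1D_2)^{-2/5}$. Nothing beyond commutativity of $\Gr$ is used here; the $\F_p$--versus--$\R$ distinction seen elsewhere in the paper only enters afterwards, when the two energy hypotheses are verified for concrete combinatorial cubes.

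The decisive difficulty is precisely the combination of the two hypotheses into one nontrivial bound on $\E^{+}(A)$ --- i.e. reproving \cite[Lemma 10]{s_sumsets}: the eigenvalue/level--set bookkeeping has to be carried out with exactly the right H\"older exponents for $58/37$, $16/37$, $10/37$ (respectively $8/5$, $2/5$) to emerge, whereas a naive Cauchy--Schwarz across the dyadic levels only delivers a substantially weaker exponent. The concluding covering step in the sum--set case and the symmetrisation in the difference--set case are comparatively routine.
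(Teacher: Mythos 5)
Your proposal coincides with what the paper actually does: the paper gives no proof of this theorem at all, only the pointer to \cite[Lemma 10]{s_sumsets} and the proof of \cite[Theorem 3]{s_sumsets}, which is exactly the reduction you make, and your sketch of the underlying third--energy/eigenvalue machinery (the identities $\operatorname{tr} M^{2}=\E_3(A)$ and $\langle M 1_B,1_B\rangle=\E^{+}(A,B)$ for the matrix $M=(r_{A-A}(a-a'))_{a,a'\in A}$, together with the level--set bound $\Delta^{3}|P_\Delta|\le \E_3(A)$) is consistent with how the cited argument runs. Since the exponent bookkeeping producing $58/37$, $16/37$, $10/37$ and $8/5$, $2/5$ lives entirely in the cited source and is not reproduced in this paper either, there is nothing further to compare.
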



Let us formulate a result from \cite[Lemma 4.1]{H_P} (it is formulated for $|\mathcal{D}|=2$ but the proof of the general case is the same). 

\begin{lemma}
	Let $Q_\mathcal{D} (A)$ be a cube. 
	One can split $[d]$ as a disjoint union of two sets $X$ and $Y$ such that $|Q_\mathcal{D} (X)| \le |Q_\mathcal{D} (Y)| \le |\mathcal{D}| |Q_\mathcal{D} (X)|$.   	
\label{l:H_P} 
\end{lemma}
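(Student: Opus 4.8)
The plan is to prove this by a greedy, one--coordinate--at--a--time balancing argument, following \cite[Lemma 4.1]{H_P}. First I would fix an arbitrary ordering of the index set $[d]=\{1,\dots,d\}$ and, for $0\le k\le d$, consider the prefix partition $X_k=\{1,\dots,k\}$, $Y_k=[d]\setminus X_k$, writing $f(k)=|Q_\mathcal{D}(X_k)|$ and $g(k)=|Q_\mathcal{D}(Y_k)|$. The two elementary facts I would record are: that $Q_\mathcal{D}(X\cup\{j\})$ is the union of the $|\mathcal{D}|$ translates $Q_\mathcal{D}(X)+\eps a_j$, $\eps\in\mathcal{D}$, so that
\[
|Q_\mathcal{D}(X)|\ \le\ |Q_\mathcal{D}(X\cup\{j\})|\ \le\ |\mathcal{D}|\cdot|Q_\mathcal{D}(X)|\,;
\]
and that $f(0)=g(d)=|Q_\mathcal{D}(\emptyset)|=1$ while $f(d)=g(0)=|Q_\mathcal{D}|$. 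Consequently $f$ is non--decreasing, $g$ is non--increasing, each of them changes by a factor at most $|\mathcal{D}|$ at every step, and $f(0)\le g(0)$ whereas $f(d)\ge g(d)$.

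Next I would let $k$ be the smallest index for which $f(k)\ge g(k)$; if $k=0$ then $|Q_\mathcal{D}|=1$ and the splitting $X=\emptyset$, $Y=[d]$ is trivially admissible, so assume $k\ge 1$. I would then show that at least one of the two candidate partitions $(X_{k-1},Y_{k-1})$ and $(X_k,Y_k)$---with the smaller block relabelled as $X$---satisfies the required double inequality. By minimality of $k$ we have $f(k-1)<g(k-1)$, so the first candidate works unless $g(k-1)>|\mathcal{D}|\,f(k-1)$; and $f(k)\ge g(k)$, so the second works unless $f(k)>|\mathcal{D}|\,g(k)$. Assuming both of these failures, I would combine $f(k)>|\mathcal{D}|\,g(k)$ with $f(k)\le|\mathcal{D}|\,f(k-1)$ to get $g(k)<f(k-1)$, and $g(k-1)>|\mathcal{D}|\,f(k-1)$ with $g(k-1)\le|\mathcal{D}|\,g(k)$ to get $f(k-1)<g(k)$---a contradiction.

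The step I expect to be the only real subtlety is this concluding case analysis, and in particular obtaining the constant $|\mathcal{D}|$ rather than $|\mathcal{D}|^2$: examining a single prefix partition near the crossover point only bounds the ratio of the two sub--cube sizes by $|\mathcal{D}|^2$ (one factor from the jump of $f$ and one from the drop of $g$ across step $k$). Keeping both candidates in play and exploiting that the two possible \emph{overshoots} at the same step are mutually exclusive is the trick that brings the constant down to the sharp value. The remaining ingredients---monotonicity and the per--step multiplicative control---are immediate from the definition of a combinatorial cube, so I do not anticipate any other difficulty.
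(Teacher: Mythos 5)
Your proof is correct: the paper itself gives no proof of this lemma (it is quoted from \cite[Lemma 4.1]{H_P} with the remark that the general-$\mathcal{D}$ case is proved the same way), and your greedy prefix argument with the two candidate partitions at the crossover index is exactly the standard argument behind that citation. The one genuinely non-trivial point --- that a single crossover partition only yields the constant $|\mathcal{D}|^2$, and that playing the two mutually exclusive overshoots against each other via $f(k)\le|\mathcal{D}|f(k-1)$ and $g(k-1)\le|\mathcal{D}|g(k)$ recovers the constant $|\mathcal{D}|$ --- is handled correctly.
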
 


Finally, we need a combinatorial result \cite[Theorem 1.2]{GMR}.

\begin{theorem}
	Let  $k\ge 2$ and $A_1,\dots, A_k \subseteq \Gr$ be finite non--empty sets. 
	Put
\[
	S = A_1+\dots+A_k  
	\quad \quad \mbox{ and } \quad \quad S_j = A_1+\dots+A_{j-1}+A_{j+1} + \dots + A_k \,.
\] 
	Then 
\[
	|S|^{k-1} \le \prod_{j=1}^k |S_j| \,. 
\]
\label{t:GMR}
\end{theorem}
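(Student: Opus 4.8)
The plan is to prove the inequality $|S|^{k-1}\le\prod_{j=1}^k|S_j|$ by induction on $k$, using a careful projection/fibering argument. The base case $k=2$ reads $|A_1+A_2|\le|A_1|\cdot|A_2|=|S_2||S_1|$, which is immediate since every element of $A_1+A_2$ is a sum of one element from each set. For the inductive step I would not try to peel off a single summand directly; instead the cleanest route is a two-pronged induction where one first establishes the key two-set lemma: for finite $B,C\subseteq\Gr$ and any further sets, a Plünnecke--Ruzsa-type covering argument shows that $|B+C|\cdot|\Gr\text{-cross terms}|$ behaves submultiplicatively. More concretely, the original GMR argument proceeds via the inequality $|A+B+C|\cdot|B|\le|A+B|\cdot|B+C|$, a Ruzsa-triangle-in-sumsets statement, which I would prove by fixing, for each $s\in A+B+C$, a representation $s=a+b+c$ and mapping $s\mapsto(a+b,\,b+c)$; injectivity-up-to-$|B|$ of this map follows because from a pair $(x,y)=(a+b,b+c)$ one recovers $s=x+c=x+(y-b)$, and $b$ ranges over at most $|B|$ values.

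With that three-set inequality in hand, the induction on $k$ runs as follows. First I would handle $k=3$: apply the three-set inequality to get $|A_1+A_2+A_3|\cdot|A_2|\le|A_1+A_2|\cdot|A_2+A_3|$, i.e. $|S|\cdot|S_1\cap\text{relevant}|$... more precisely $|S|\le|S_3||S_1|/|A_2|$, and I need to upgrade this to $|S|^2\le|S_1||S_2||S_3|$. This requires running the three-set inequality in all three cyclic orders and multiplying, then extracting the symmetric bound; the trick is that $|A_2|\ge|S|/|S_1+\text{something}|$ can be eliminated by noting $|A_i|\le|S_j|$ for appropriate $i,j$ won't suffice, so instead one multiplies the three cyclic versions $|S||A_1|\le|S_3||S_2|$, $|S||A_2|\le|S_1||S_3|$, $|S||A_3|\le|S_2||S_1|$ — wait, these need rechecking — and combines with $|A_1||A_2||A_3|\ge|S|$ is false in general, so one must instead induct more cleverly, partitioning $[k]$ into two blocks $[k]=X\sqcup Y$, writing $S=S_X+S_Y$ where $S_X=\sum_{i\in X}A_i$, and applying the $k=2$ case together with the inductive hypotheses for $|X|$ and $|Y|$ separately, then optimizing over the choice of partition (a Hölder-type averaging over all $2^{k}$ splits).

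The main obstacle I anticipate is exactly this combinatorial bookkeeping in the inductive step: one needs the three-set inequality $|A+B+C|\cdot|B|\le|A+B|\cdot|B+C|$ as the engine, and then an averaging argument over subsets $X\subseteq[k]$ to convert the pairwise/blockwise bounds into the clean product $\prod_j|S_j|$ with exactly the exponent $k-1$ on $|S|$. The exponent arithmetic — ensuring that summing the logarithms of the blockwise inequalities over a suitable family of partitions yields precisely $(k-1)\log|S|\le\sum_j\log|S_j|$ and not something weaker — is where all the care goes; everything else is routine. I would organize the write-up as: (i) the Ruzsa-type three-set lemma with its explicit map; (ii) a lemma deriving, for any split $[k]=X\sqcup Y$, the bound $|S|\le|S_X+A_j|\cdot|\dots|$ for $j\in Y$; (iii) the final averaging over an appropriate collection of $k$ splits (e.g.\ the $k$ "leave-one-out" type decompositions combined with smaller ones via the induction hypothesis), concluding by multiplying and taking $(k-1)$-th roots.
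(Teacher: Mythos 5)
The paper itself offers no proof of this statement --- it is quoted verbatim from Gyarmati--Matolcsi--Ruzsa \cite[Theorem 1.2]{GMR} --- so your argument has to stand on its own, and as written it does not. The first concrete failure is in the proof of your engine lemma $|A+B+C|\cdot|B|\le|A+B|\cdot|B+C|$. Fixing one representation $s=a_s+b_s+c_s$ and mapping $s\mapsto(a_s+b_s,\,b_s+c_s)$, the observation that a pair $(x,y)$ determines $s=x+y-b$ up to the at most $|B|$ choices of $b$ bounds the number of \emph{preimages} of each pair by $|B|$; that yields $|A+B+C|\le |B|\cdot|A+B|\cdot|B+C|$, i.e. it puts the factor $|B|$ on the wrong side of the inequality. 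To place $|B|$ on the left you need an injection defined on $(A+B+C)\times B$, and no naive choice of such a map works: this is precisely the well-known obstruction that makes the ``plus'' version of Ruzsa's triangle inequality genuinely harder than the ``minus'' version. The inequality you want is in fact true (after relabelling it is the companion \emph{submultiplicativity} result $|B+A+C|\cdot|B|\le|B+A|\cdot|B+C|$ of the same Gyarmati--Matolcsi--Ruzsa paper), but its proof requires a considerably more careful construction than the one you sketch.

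Second, even granting that lemma, you never complete the deduction. For $k=3$ the three instances $|S|\,|A_i|\le|S_j|\,|S_l|$ multiply to give $|S|^3\prod_i|A_i|\le\bigl(\prod_j|S_j|\bigr)^2$, and extracting $|S|^2\le\prod_j|S_j|$ from this would additionally require $\prod_j|S_j|\le|S|\prod_i|A_i|$ --- yet another nontrivial inequality that you neither state nor prove. Likewise, the leave-one-out splits only give $|S|\le|S_j|\,|A_j|$, hence $|S|^k\le\prod_j|S_j|\cdot\prod_j|A_j|$, which is strictly weaker than the claim because $\prod_j|A_j|$ can vastly exceed $|S|$ (take all $A_j$ equal to one arithmetic progression). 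Your own text flags these dead ends (``wait, these need rechecking'', ``the exponent arithmetic \dots is where all the care goes'') without resolving them, so both the $k=3$ case and the inductive step are missing. The actual argument of Gyarmati, Matolcsi and Ruzsa is a separate induction built on a bespoke injection for the superadditivity statement itself; the submultiplicativity lemma alone does not imply it.
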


Theorem \ref{t:GMR} has 

\begin{corollary}
	Let $S_1,\dots, S_5 \subseteq \F_p$ be sets, $|S_j| \ge 2$ and $S=S_1+\dots+S_5$.
	Then 
\[
	|SS|, |S/S| \gg \min \{ |S|^{26/25}, |S|^{2/5} p^{1/2} \} \,.
\]
\label{c:Ruzsa_Sarkozy}
\end{corollary}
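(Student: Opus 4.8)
The plan is to turn the rich additive structure of a five-fold sumset into a multiplicative gain via a sum--product estimate over $\F_p$, using the Pl\"unnecke-type Theorem~\ref{t:GMR} as the organising tool; this is exactly the step that forces five summands. It suffices to bound $\E^\times(S)$ from above, since $|SS|,|S/S|\ge|S|^4/\E^\times(S)$ by Cauchy--Schwarz, using $\E^\times(S)=\sum_x r_{SS}^2(x)=\sum_x r_{S/S}^2(x)$.

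First I would reduce to a bounded number of summands. Applying Theorem~\ref{t:GMR} with $k=5$ to $S$ and the four-fold partial sums $W_j:=\sum_{i\ne j}S_i$ gives $|S|^4\le\prod_{j=1}^5|W_j|$, hence $|W_{j_0}|\ge|S|^{4/5}$ for some $j_0$. Since $|S_{j_0}|\ge2$, choosing two elements of $S_{j_0}$ and one element of each remaining $S_i$ places inside $S$ a translate $A$ of $W_{j_0}$ together with $A+\lambda$ for some $\lambda\ne0$; thus $AA\subseteq SS$ and $(A+\lambda)(A+\lambda)\subseteq SS$, and likewise for quotients, with $|A|\ge|S|^{4/5}$. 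In the same spirit as Lemma~\ref{l:H_P}, a greedy grouping of the five sets (each of size $\ge2$) into two bins also writes $S=A_1+A_2$ with $|A_1|,|A_2|\ge2$ and, from $\max(|A_1|,|A_2|)\le|S|\le|A_1||A_2|$, with $|A_1|,|A_2|$ confined between fixed powers of $|S|$; one may moreover keep track of which four-fold partial sums sit (up to translation) inside $S$.

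Now I would run the sum--product estimate on this configuration. When $|A|\le p^{36/67}$, Theorem~\ref{t:RSS_2/9} applied to $A$ and $A+\lambda$ yields $\max\{|AA|,|(A+\lambda)(A+\lambda)|\}\gg|A|^{1+2/9}$, a first (weaker) lower bound for $|SS|$; when $|A|>p^{36/67}$, a set that large already forces $|SS|\gg p\ge|S|^{2/5}p^{1/2}$ via the point--line/point--plane incidence bounds of Theorems~\ref{t:SdZ} and~\ref{t:Misha+}. To push the main exponent up to $26/25$ one instead bounds $\E^\times(S)$ with $S=A_1+A_2$ by expanding the defining equation $q_1q_2=q_3q_4$ as $(a_1+a_1')(a_2+a_2')=(a_3+a_3')(a_4+a_4')$, with $a_i\in A_1$ and $a_i'\in A_2$, and feeding it into Theorem~\ref{t:sh_Kloosterman}, after the standard passage to a popular sub-sumset so that the additive multiplicity function of $A_1+A_2$ is controlled; plugging in the balance above and optimising over the two size regimes gives $|SS|,|S/S|\gg\min\{|S|^{26/25},|S|^{2/5}p^{1/2}\}$. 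I expect the main obstacle to be exactly this optimisation: keeping the multiplicity loss in $A_1+A_2$ under control when that sumset is far from direct (for which one invokes Theorem~\ref{t:GMR} together with Pl\"unnecke--Ruzsa a second time, to pass to a cleaner sub-sumset), and checking that the modest exponent $26/25$ survives the arithmetic once most of the five-fold structure has been consumed by the balancing — the term $|S|^{2/5}p^{1/2}$ marking precisely the range where Theorems~\ref{t:RSS_2/9} and~\ref{t:sh_Kloosterman} cease to be efficient.
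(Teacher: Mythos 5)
You have the correct skeleton --- Theorem \ref{t:GMR} with $k=5$, plus the observation that each four--fold partial sum $W_j$ sits, after translation, inside $S\cap(S-x)$ for some $x\neq0$ --- but the quantitative engine you attach to it does not deliver the stated exponents, and the engine the paper actually uses is absent. The paper bounds $|S\cap(S-x)|$ directly: every $s\in S$ has at least $|S|$ representations $s=\pi/q$ with $\pi\in SS$, $q\in S$, so $|S\cap(S-x)|\le|S|^{-2}\,|\{(\pi_1,\pi_2,q_1,q_2)\in(SS)^2\times S^2:\ \pi_1/q_1-\pi_2/q_2=x\}|$, and the right--hand side is a point--line incidence count between the Cartesian product $SS\times SS$ and $|S|^2$ lines, which Theorem \ref{t:SdZ} bounds by $\ll|SS|^2/p+|SS|^{5/4}|S|^{-1/2}$. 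Running this for all five $j$ and inserting it into $|S|^4\le\prod_j|W_j|$ gives $|S|^{4/5}\ll|SS|^2/p+|SS|^{5/4}|S|^{-1/2}$, which is exactly where both $|S|^{26/25}$ and $|S|^{2/5}p^{1/2}$ come from; the $\min$ reflects which term of the incidence bound dominates, not a case split on $|A|$ versus $p^{36/67}$.

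The routes you substitute for this step fail numerically. First, Theorem \ref{t:RSS_2/9} applied to $A$ and $A+\lambda$ with $|A|\ge|S|^{4/5}$ gives only $|SS|\gg|A|^{11/9}\ge|S|^{44/45}$, which is weaker than the trivial bound $|SS|\ge|S|$. Second, the claim that $|A|>p^{36/67}$ forces $|SS|\gg p$ is unjustified: sets of size about $p^{0.54}$ are far below the range where incidence arguments give product sets of size $\gg p$, and the corollary itself only needs (and only achieves) $|S|^{2/5}p^{1/2}$, which is typically much smaller than $p$. Third, and most importantly, the main route via $\E^\times(S)$ and Theorem \ref{t:sh_Kloosterman} cannot give $\E^\times(S)\ll|S|^{3-1/25}$ as required: Theorem \ref{t:sh_Kloosterman} is a fixed--$\lambda$ bound, and even in the most favourable balanced case $|A_1|=|A_2|=|S|^{1/2}$ it gives $\lesssim|S|^{71/48}$ solutions of $(a_1+a_1')(a_2+a_2')=\lambda$ per value of $\lambda$; since the total number of representations over all $\lambda$ is $|A_1|^2|A_2|^2=|S|^2$, summing yields at best $\E^\times(S)\lesssim|S|^{2+71/48}$, which is worse than the trivial $|S|^3$. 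The multiplicity of the representation $q=a+a'$ only degrades this further, so the step ``plugging in the balance above and optimising \dots gives $26/25$'' is an assertion with no computation that supports it. To repair the proof you need the paper's mechanism: the $|S|$--fold multiplicative representation of elements of $S$ and the Stevens--de Zeeuw bound applied to $SS\times SS$.
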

\begin{proof} 
	We consider the case $SS$ because for $S/S$ the argument is similar. 
	Let $\Pi = SS$. 
	Taking two different elements $\a,\beta \in S_5$, we have  $S_1+\dots+S_4 \subseteq (S - \a) \cap (S - \beta)$. 
	Put $x= \a-\beta \neq 0$ and let us estimate size of  $S\cap (S-x)$. 
Applying Theorem \ref{t:SdZ}, we have 
\[
|S_1+\dots+S_4| \le |S\cap (S-x)| 
	\le |S|^{-2} |\{ (\pi_1,\pi_2, q_1,q_2) \in \Pi^2 \times S^2 ~:~ \pi_1/q_1 - \pi_2/q_2 = x \}| 
\ll 
\]
\[
\ll 
\frac{|SS|^2}{p} + |SS|^{5/4} |S|^{-1/2} + |SS||S|^{-1} 
\ll 
\frac{|SS|^2}{p} + |SS|^{5/4} |S|^{-1/2} \,.
\]
The same holds for all $j\in [5]$. 
Using Theorem \ref{t:GMR}, we obtain 
\[
|S|^4 \ll \left( \frac{|SS|^2}{p} + |SS|^{5/4} |S|^{-1/2} \right)^5 \,.
\]
It gives us 
\[
|SS| \gg \min \{ |S|^{26/25}, |S|^{2/5} p^{1/2} \} 
\]
as required. Notice that a similar argument was used in \cite{Sarkozy_R}. 
	This completes the proof. 
$\hfill\Box$
\end{proof}

\section{Proper cubes}

In this section we consider 
proper cubes.  
The results here are auxiliary but they show transparently  that such cubes have strong additive properties 
(and hence we can hope to demonstrate that combinatorial cubes have rather weak multiplicative behaviour).
To do this 
we calculate some additive characteristics  of proper cubes.

Let $l\ge 1$ be an integer.  
Take  a vector $\vec{x} = (x_1,\dots,x_d)$ with $x_j \le l$.
For any such vector we write $n_{j} = |\{ i\in [d] ~:~ x_i = j \}|$, $0 \le j \le l$. 
Clearly, $\sum_{j=0}^{h} n_j = d$ and  
we say that $\vec{x}$ has {\it type} $(n_{0},\dots,n_l)$. 
We write $p_{k,h} (m)$ for the number of solutions to the equation 	$c_1+\dots+c_k = m$, $0\le c_i \le h$. 
Clearly, $p_{k,1} (m) = \binom{k}{m}$.  
For the  general theory of partitions consult, e.g., \cite{Andrews}.

\begin{lemma}
	Let $h,k,l$, $l \le kh$ be positive integers.
	Also, let a vector $\vec{x}$ has type $(n_{0},\dots,n_l)$ and let $Q_h$ be a proper combinatorial cube. 
	Then $|kQ_h| \le |Q_h|^{\log_{h+1} (kh+1)}$ and 
	\begin{equation}\label{f:Q^(k)_T_k}
		r_{kQ_h}
		(\vec{x}) \ge \prod_{j=1}^{kh} (p_{k,h} (j))^{n_j}  \,.
	\end{equation}
	In particular, 
\[
	\T^{+}_k (Q_h) \gg 
		|Q_h|^{2k-1 - O(\log_{h+1} k)} 
	\,, \quad \quad  
	\T^{+}_k (Q) \ge  |Q|^{2k - 1 - \frac{\log k}{2}}  \,, 
\]
	and 
\[
	\E^{+}(Q_h) \ge |Q_h|^{k+\frac{h^{k+1}}{(k+1) (h+1)^k \ln (h+1)}} \,, \quad \quad  
		\E^{+}_k (Q) \ge |Q|^{k + 2^{-k}} \,.
\]
\label{l:Q^(k)_T_k}
\end{lemma}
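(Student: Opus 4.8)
## Proof Plan

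The plan is to establish the representation-function bound \eqref{f:Q^(k)_T_k} first, since all the remaining estimates follow from it by appropriate choices of parameters and by Cauchy--Schwarz / H\"older-type manipulations. To count $r_{kQ_h}(\vec{x})$ from below, I would use that $Q_h$ is proper, so each element of $kQ_h = kQ'_h + ka_0$ is written (after subtracting $ka_0$) as $\sum_{i=1}^d c_i a_i$ where $c_i = \sum_{t=1}^k \eps^{(t)}_i$ ranges over $\{0,1,\dots,kh\}$, and because the cube is proper the coordinates $a_1,\dots,a_d$ are "independent" in the sense that distinct coordinate vectors $(c_1,\dots,c_d)$ give distinct sums. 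Hence for a fixed target vector $\vec{x}$ of type $(n_0,\dots,n_l)$, the number of ways to realise it as a $k$-fold sum factors over the coordinates: coordinate $i$ with $x_i = j$ contributes a factor $p_{k,h}(j)$, the number of ways to write $j = \eps^{(1)}_i + \dots + \eps^{(k)}_i$ with each $\eps^{(t)}_i \in \{0,\dots,h\}$. Multiplying these together gives exactly $\prod_{j} (p_{k,h}(j))^{n_j}$, which is \eqref{f:Q^(k)_T_k}. The upper bound $|kQ_h| \le |Q_h|^{\log_{h+1}(kh+1)}$ is immediate since $kQ_h$ is itself (up to translation) a combinatorial cube of the same dimension $d$ with digit set $\{0,\dots,kh\}$, so $|kQ_h| \le (kh+1)^d = ((h+1)^d)^{\log_{h+1}(kh+1)} = |Q_h|^{\log_{h+1}(kh+1)}$.

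Next I would derive the $\T^+_k$ bounds. Since $\T^+_k (Q_h) = \sum_{\vec{x}} r^2_{kQ_h}(\vec{x}) \ge \sum_{\vec{x} \in kQ_h} r^2_{kQ_h}(\vec{x})$, and since by Cauchy--Schwarz $\sum_{\vec{x}} r_{kQ_h}(\vec{x}) = |Q_h|^k$ while the support has size at most $|kQ_h| \le |Q_h|^{\log_{h+1}(kh+1)}$, we get $\T^+_k(Q_h) \ge |Q_h|^{2k}/|kQ_h| \ge |Q_h|^{2k - \log_{h+1}(kh+1)}$. For large $h$ one checks $\log_{h+1}(kh+1) = 1 + O(\log_{h+1} k)$, giving the first displayed bound; for $h=1$ we have $\log_2(k+1) \le 1 + \log_2 k$, but the sharper constant $2k-1-\tfrac12\log k$ claimed requires a little more care — here I would instead use the explicit product $r_{kQ}(\vec{x}) \ge \prod_j \binom{k}{j}^{n_j}$ together with a more careful estimate of $\sum_{\vec{x}} r^2_{kQ}(\vec{x})$, exploiting that the central binomial coefficients $\binom{k}{j}$ near $j \approx k/2$ are of size $\approx 2^k/\sqrt{k}$, so a positive proportion of the mass concentrates there and squaring gains the extra $\sqrt{k}$ factor (equivalently $\tfrac12\log k$ in the exponent) rather than just a factor $k$.

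For the $\E^+$ bounds I would take $k$-fold sums in a different way. Writing $Q_h = Q_h(X) + Q_h(Y)$ or iterating, note $\E^+(Q_h) = \sum_x r^2_{Q_h - Q_h}(x) \ge |Q_h|^4 / |Q_h - Q_h|$, but that only gives a polynomial-type bound; instead, to get the stretched exponent $k + \tfrac{h^{k+1}}{(k+1)(h+1)^k\ln(h+1)}$ I would split $[d]$ into $k$ (or $k+1$) blocks of roughly equal size $d/k$, so $Q_h$ is a $k$-fold sumset $Q_h = Q^{(1)}_h + \dots + Q^{(k)}_h$ of proper cubes each of size $(h+1)^{d/k} = |Q_h|^{1/k}$, and then lower-bound $\E^+(Q_h)$ by the number of coincidences $\sum_{t} c^{(t)}_i = \sum_t (c^{(t)}_i)'$ coordinate-by-coordinate; the per-coordinate count is the additive energy of the arithmetic progression $\{0,1,\dots,h\}$ over a block, and raising to the power (number of blocks) produces the exponent stated, with the $\ln(h+1)$ and $(h+1)^k$ arising from Stirling-type estimates of $p_{k,h}$ near its peak and from the normalisation $|Q_h|^{1/k}$. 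The case $\E^+_k(Q) \ge |Q|^{k+2^{-k}}$ is the same computation specialised to $h=1$: here $\E^+_k(Q) = \sum_{\vec{x}} r^{k}_{Q-Q}(\vec{x})$-type quantity reduces to a product over coordinates of $r_{\{0,1\}-\{0,1\}}$ energies, and one tracks the gain $2^{-k}$ through the dimension split into $2^{?}$ or rather through the identity $\E^+_k(Q) \ge (\E^+_2(Q))^{?}$ combined with the proper-cube factorisation.

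\textbf{Main obstacle.} The routine part is the combinatorial identity \eqref{f:Q^(k)_T_k} and the $\T^+_k$ bound via Cauchy--Schwarz. The delicate part is getting the \emph{exact} exponents claimed in the $h=1$ refinements ($2k-1-\tfrac{\log k}{2}$ for $\T^+_k(Q)$, $k+2^{-k}$ for $\E^+_k(Q)$) and the $h$-dependent exponent for $\E^+(Q_h)$: these are not mere order-of-magnitude statements, so one must carry the Stirling estimates for $p_{k,h}$ and for binomial coefficients with enough precision, and choose the dimension partition optimally. I expect the bookkeeping in the $\E^+(Q_h)$ estimate — correctly passing from the per-coordinate energy of $\{0,\dots,h\}$ to the global exponent with the constant $\tfrac{1}{(k+1)(h+1)^k\ln(h+1)}$ — to be where the real work lies.
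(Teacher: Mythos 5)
Your handling of the representation bound \eqref{f:Q^(k)_T_k}, of $|kQ_h|\le|Q_h|^{\log_{h+1}(kh+1)}$ and of the two $\T^{+}_k$ estimates is correct and essentially the paper's argument; for the $h=1$ refinement the clean way to finish (and what your ``concentration of central binomials'' remark amounts to) is that properness makes the count factor over coordinates, so that
$\T^{+}_k(Q)\ge\bigl(\sum_{j=0}^{k}\binom{k}{j}^2\bigr)^{d}=\binom{2k}{k}^{d}\ge\bigl(4^{k}/(2\sqrt{k})\bigr)^{d}=|Q|^{2k-1-\frac{\log k}{2}}$.

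The $\E^{+}$ part of your plan has a genuine gap. To begin with, the bound $\E^{+}(Q_h)\ge|Q_h|^{k+\cdots}$ must be read as a bound for the higher energy $\E^{+}_k(Q_h)$ (as literally written it is impossible for $k\ge3$, since $\E^{+}(A)\le|A|^{3}$); you did not flag this, and your proposal to split $[d]$ into $k$ blocks and count ``coincidences'' for the two--fold energy $\E^{+}(Q_h)$ cannot produce an exponent above $3$. Even read as a statement about $\E^{+}_k$, the block decomposition is the wrong mechanism, and you leave the decisive steps as question marks. What works --- and is the paper's one--line argument --- is the same single--coordinate factorization you already used for $\T^{+}_k$: since $\E^{+}_k(A)=\sum_{x}r_{A-A}^{k}(x)$ and, for a proper cube, $r_{Q_h-Q_h}$ factors over the $d$ coordinates with $r_{H-H}(j)=h-|j|+1$ for $H=\{0,\dots,h\}$, one gets
\[
\E^{+}_k(Q_h)\ \ge\ \Bigl(\sum_{|j|\le h}(h-|j|+1)^{k}\Bigr)^{d}
=\Bigl((h+1)^{k}+2\sum_{m=1}^{h}m^{k}\Bigr)^{d}
\ \ge\ \Bigl((h+1)^{k}+\tfrac{2h^{k+1}}{k+1}\Bigr)^{d}\,,
\]
and the stated exponent follows by writing $(1+x)^{d}=|Q_h|^{\ln(1+x)/\ln(h+1)}$ with $x=\frac{2h^{k+1}}{(k+1)(h+1)^{k}}$ and using $\ln(1+x)\ge x/2$. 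No Stirling estimates for $p_{k,h}$ and no partition of $[d]$ enter at all. For $h=1$ the same display reads $\E^{+}_k(Q)\ge(2^{k}+2)^{d}\ge|Q|^{k+2^{-k}}$, because $\log_{2}(1+2^{1-k})\ge2^{-k}$.
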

\begin{proof}
	Put $H=\{0,1,\dots,h\}$.  
	The bound $|kQ_h| \le |Q_h|^{\log_{h+1} (kh+1)}$  follows from the fact that 
	$$
		kQ_h \subseteq ka_0 + \sum_{j=1}^d \{0,1,\dots,kh\} \cdot a_j =  ka_0 + \sum_{j=1}^d kH \cdot a_j \,.
	$$
	Further take any $j$ such that  $0\le j\le l$ and consider positions $S \subseteq [d]$ of $\vec{x}$ with $x_i =j$. 
	Then the number of representations of  any $s\in S$ as a sum of $k$ elements from $Q_h$ equals the number of the solutions to the equation 
	$c_1+\dots+c_k = j$, $0\le c_i \le h$.
	In other words, this is  $(p_{k,h} (j))^{n_j}$ and  hence we obtain \eqref{f:Q^(k)_T_k}. 
	To calculate $\T^{+}_k (Q_h)$ we sum the previous bound (or use the direct argument) and crudely estimating the sum  $\T^{+}_k (H)$ via dispersion,  
	to get 
\[
	\T^{+}_k (Q_h) \ge \sum_{n_j} \frac{d!}{\prod_j n_j!} \prod_j p_{k,h} (j)^{2n_j} 
	=
	\left(\sum_{j=0}^{kh} p^2_{k,h} (j) \right)^d =  (\T^{+}_k (H))^d 
	\gg
\]
\[ 
	\gg
	\frac{(h+1)^{2kd}}{O((\sqrt{k}h)^d)}
	\gg 
	|Q_h|^{2k-1 - O(\log_{h+1} k)} \,. 
\]
	To obtain the required lower bound for $\T^{+}_k (Q)$ we use the formula  $p_{k,1} (m) = \binom{k}{m}$ and make the previous calculation to get
\[
	\T^{+}_k (Q) \ge \left(\sum_{j=0}^{k} \binom{k}{j}^2 \right)^d = \binom{2k}{k}^d 
		\ge
	\left( \frac{2^{2k}}{2\sqrt{k}} \right)^d \ge |Q|^{2k - 1 - \frac{\log k}{2}} \,.	
\]

	Similarly, because the number of the solutions to the equation $c_1-c_2 =j$ is $h-|j|+1$, where  $0\le c_1,c_2 \le h$ and $j$ is any number with $|j|\le h$, we have 
\[
	\E^{+}_k (Q_h) \ge \sum_{n_j} \frac{d!}{\prod_j n_j!} \prod_j (h-|j|+1)^{n_j k} 
	= 
	\left( \sum_{|j|\le h} (h-|j|+1)^k \right)^d
	=
	((h+1)^k + 2 \sum_{m=1}^h m^k )^d
	\ge
\]
\[
	\ge
	\left((h+1)^k + \frac{2 h^{k+1}}{k+1} \right)^d
	\ge
	|Q_h|^{k+\frac{h^{k+1}}{(k+1)(h+1)^k \ln (h+1)}} \,,
\]
	and for $h=1$, we get
\[
	\E^{+}_k (Q) \ge (2^k+2)^d \ge |Q|^{k + 2^{-k}} \,.
\]
	This completes the proof. 
	$\hfill\Box$
\end{proof}

\bigskip

Now we show that proper combinatorial cubes cannot be closed under the multiplication in a rather strong sense.

\begin{theorem}
	Let $\F$ be either $\R$ or $\F_p$ and $h\ge 1$ be a positive integer.
	If $Q_h$ is a proper cube, $|Q_h| < |\F|^{24/49}$, then there is an absolute constant $c>0$ such that 
	\begin{equation}\label{f:Q_energy}
		\E^\times (Q_h) \ll |Q_h|^{3-c}
	\end{equation} 
	further in $\R$ 
\begin{equation}\label{f:Q_energy_R}
	\E^\times (Q_h) \ll |Q_h|^{\frac32 + \log_{h+1} (2h+1)}\,,
\end{equation}
	and in $\F_p$ for any proper cube $Q_h$ 
	\begin{equation}\label{f:Q_energy+}
	\E^\times (Q_h) \lesssim  \frac{|Q_h|^{3+\log_{h+1} (2h+1)}}{p} +  \min\{ |Q_h|^{2 + \frac23 \log_{h+1} (2h+1) }, |Q_h|^{1 + \frac32 \log_{h+1} (2h+1) } \} \,.
	\end{equation}
\label{t:Q_mult_energy}
\end{theorem}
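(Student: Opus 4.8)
\medskip\noindent\textbf{Proof proposal.}

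The common mechanism is that a \emph{proper} cube is a sumset of two much smaller pieces. By Lemma~\ref{l:H_P} split $[d]=X\sqcup Y$ with $|Q_h(X)|\le|Q_h(Y)|\le(h+1)|Q_h(X)|$; since $Q_h$ is proper, $Q_h=B+C$ with $B=Q_h(X),\ C=Q_h(Y)$, $|B||C|=|Q_h|$, and every $q\in Q_h$ is written \emph{uniquely} as $q=b+c$ ($b\in B$, $c\in C$), so $|B|,|C|=|Q_h|^{1/2+o(1)}$, and both equal $|Q_h|^{1/2}$ up to a factor $\sqrt2$ when $h=1$. One also records the doubling bound $|Q_h+Q_h|,\,|Q_h-Q_h|\le(2h+1)^{d}=|Q_h|^{\ell}$, $\ell:=\log_{h+1}(2h+1)$ (for the sumset, Lemma~\ref{l:Q^(k)_T_k} with $k=2$). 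Using the uniqueness, $\E^\times(Q_h)=\sum_\lambda r^2_{Q_h\cdot(B+C)}(\lambda)=\sum_{\mu\ne0}|Q_h\cap\mu Q_h|^2+O(|Q_h|^2)$, which exhibits $\E^\times(Q_h)$ as an incidence count between a Cartesian-product point set assembled from $B$ and $C$ and a family of lines (resp.\ planes) assembled from the remaining copies of $B$, $C$ and of $Q_h-Q_h$.

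For $\F=\R$ I would obtain \eqref{f:Q_energy} by feeding the first form into Theorem~\ref{t:4.5-eps}, the \emph{multiplied} set being $Q_h$ and the two summand sets $B,C$: the hypothesis $|C|^\kappa\le|Q_h|\le|C|^2$ holds for small $\kappa$ because $|B|\le|C|$ forces $|Q_h|\le|C|^2$, and it holds with $\kappa=2$ when $h=1$ (exact splitting). This gives $\E^\times(Q_h)\ll|B|^{4/3}|Q_h|^{3/2-\delta}|C|^{5/3}=|Q_h|^{3/2+o(1)}\cdot|Q_h|^{3/2-\delta}\ll|Q_h|^{3-\delta+o(1)}$; for $h=1$ there are no $o(1)$ losses and one gets $|Q_h|^{3-\delta}$. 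For the explicit bound \eqref{f:Q_energy_R} one uses only the doubling bound: a Szemer\'edi--Trotter argument (Theorem~\ref{t:SzT}) applied to the ``grid'' coming from $Q_h=B+C$ — the point being that a cube is an \emph{additive}, not a multiplicative, grid, so a generic dilate $\mu Q_h$ meets it sparsely on average — controls $\E^\times(A)$ for $A\subseteq\R$ by $|A\pm A|$ and $|A|$, giving for $A=Q_h$ the bound $\ll|Q_h|^{3/2}|Q_h\pm Q_h|=|Q_h|^{3/2+\ell}$. Since $3/2+\log_3 5<3$, estimate \eqref{f:Q_energy_R} already gives \eqref{f:Q_energy} for all $h\ge2$, while the case $h=1$ is the (clean) application of Theorem~\ref{t:4.5-eps} above.

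Over $\F_p$ I would feed the same reformulation into the $\F_p$ incidence theorems. The point--plane estimate (Theorem~\ref{t:Misha+}) produces the genuine $p$-main term $|\mathcal P||\Pi|/p=|Q_h|^{3+\ell}/p$ (here $|\Pi|\asymp|Q_h|\,|Q_h-Q_h|=|Q_h|^{1+\ell}$, $|\mathcal P|\asymp|Q_h|^2$) together with one error regime, while the Cartesian-product point--line estimate (Theorem~\ref{t:SdZ}; equivalently the hyperbola count of Theorem~\ref{t:sh_Kloosterman}) produces the other; optimising over which factor of the product is the point set and which is the line set yields the $\min\{|Q_h|^{2+\frac23\ell},|Q_h|^{1+\frac32\ell}\}$ of \eqref{f:Q_energy+}. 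For \eqref{f:Q_energy} over $\F_p$: when $h\ge2$ the $\min$ in \eqref{f:Q_energy+} is $\le|Q_h|^{2+\frac23\log_3 5}=|Q_h|^{3-c'}$ with $c'=1-\tfrac23\log_3 5>0$, and the hypothesis $|Q_h|<p^{24/49}$ (which also encodes the admissible ranges of Theorems~\ref{t:SdZ}, \ref{t:sh_Kloosterman}, \ref{t:RSS_2/9}) makes $|Q_h|^{3+\ell}/p$ negligible against $|Q_h|^{3-c}$; the single remaining case $h=1$ is handled by passing, through multiplicative Balog--Szemer\'edi--Gowers, to a large $Q'\subseteq Q_h$ with $|Q'Q'|\le|Q'|^{1+O(c)}$ (which inherits the small additive doubling of $Q_h$) and invoking the sum--product estimate of Theorem~\ref{t:RSS_2/9} to force $c$ below an absolute constant.

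The only real difficulty is to beat the trivial $\E^\times(Q_h)\le|Q_h|^3$. A geometric progression, the extremal set for multiplicative energy, has \emph{large} additive doubling, so one is forced to use that $|Q_h\pm Q_h|$ is tiny; in the incidence pictures the hard point is the degenerate contribution ($\mu=1$, equal pairs, ruling lines of the surface $z=xy$), which by itself already gives $|Q_h|^3$ and can only be tamed by the sumset-sensitive inputs (Theorems~\ref{t:4.5-eps}, \ref{t:SdZ}, \ref{t:Misha+}, \ref{t:sh_Kloosterman}). Over $\F_p$ there is in addition the routine bookkeeping of keeping $|Q_h|^{3+\ell}/p$ a genuine main term and of balancing the two error regimes; over $\R$ the only nuisance is the mildly unbalanced splitting when $h\ge2$, which is why estimate \eqref{f:Q_energy_R} rather than Theorem~\ref{t:4.5-eps} is used there.
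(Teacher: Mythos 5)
Your argument has one genuine gap: the case $h=1$ over $\F_p$ in \eqref{f:Q_energy}. You propose to close it by combining multiplicative Balog--Szemer\'edi--Gowers with Theorem~\ref{t:RSS_2/9}, but that theorem requires control of \emph{both} $|AA|$ and $|(A+\la)(A+\la)|$ for some shift $\la\neq 0$. After BSG you only know that some $B\subseteq Q$ with $|B|\gg_M|Q|$ has $|BB|\ll_M|B|$, plus the inherited bound $|B\pm B|\le|Q\pm Q|\le(2h+1)^d=|Q|^{\log_{h+1}(2h+1)}$; nothing bounds the product set of a \emph{shifted} copy of $B$, so Theorem~\ref{t:RSS_2/9} simply does not apply. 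The paper instead invokes a different result from the same reference, \cite[Theorem 4]{RSS}: a set $B\subseteq\F_p$ with $|B|<p^{24/49}$ and few products satisfies $|B-B|\gg_M|B|^{3/2+1/24}$, which collides with the small difference set of the cube and forces $M\gg|Q|^{c}$. This is also the only place where the hypothesis $|Q_h|<|\F|^{24/49}$ is actually used; your attribution of it to the admissible ranges of Theorems~\ref{t:SdZ} and \ref{t:sh_Kloosterman} is off (the former has no such restriction and the latter plays no role in this proof). The same BSG scheme, with the result of \cite{s_weak} ($|B-B|\gg_M|B|^{5/3}$ for sets with few products) in place of \cite[Theorem 4]{RSS}, is how the paper proves \eqref{f:Q_energy} over $\R$ as well --- the splitting $Q_h=B+C$ is not used there at all.

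The rest is essentially sound, partly by a different route. For \eqref{f:Q_energy_R} your bound $\E^\times(Q_h)\ll|Q_h+Q_h|\,|Q_h|^{3/2}$ is exactly the paper's Szemer\'edi--Trotter argument (popular values $\Omega_\tau$ of $r_{QQ}$ against the lines $\omega=q_1(s-q_2)$, $s\in Q+Q$), and your observations that this settles \eqref{f:Q_energy} for all $h\ge2$ in $\R$, and that \eqref{f:Q_energy+} together with $|Q_h|<p^{24/49}$ settles it for all $h\ge2$ in $\F_p$, are correct. Your route to \eqref{f:Q_energy} in $\R$ via $Q=B+C$ and Theorem~\ref{t:4.5-eps} is a valid alternative --- the exponents close exactly, $|B|^{4/3}|Q|^{3/2-\d}|C|^{5/3}\ll_h|Q|^{3-\d}$ --- and is in substance the alternative proof the paper records in Remark~\ref{r:SZ}. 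For \eqref{f:Q_energy+}, however, both error regimes come from Theorem~\ref{t:SdZ} alone, applied to the point set $(Q+Q)\times\Omega_\tau$ with the two possible assignments of the factors to the asymmetric term $|A|^{3/4}|B|^{1/2}|\mathcal L|^{3/4}$; Theorem~\ref{t:Misha+} is not needed, and you have not checked that it would reproduce either of the stated exponents.
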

\begin{proof}
	Let $Q=Q_h$. 
	Take 
	 a parameter $\tau\le |Q|$ and consider the set $\Omega_\tau$ such that $r_{QQ} (\omega) \ge \tau$ for any $\omega \in \Omega_\tau$.
	Using the Szemer\'edi--Trotter Theorem \ref{t:SzT}, we have
	\[
		\tau |Q| |\Omega_\tau| \le |\{ (q_1,q_2,s,\o)\in Q^2 \times (Q+Q) \times \Omega_\tau ~:~ q_1 (s-q_2) = \o \}| 
		\ll
	\]
	\[
		\ll  
		|\Omega_\tau|^{2/3} |Q|^{4/3} |Q+Q|^{2/3} + |\Omega_\tau| |Q| + |Q+Q| |Q| \,,
	\]
	and hence for $\tau \gg 1$, we obtain  
	\[
		|\Omega_\tau| \ll \max\{|Q+Q|^2 |Q| \tau^{-3}, |Q+Q| \tau^{-1} \} \ll |Q+Q|^2 |Q| \tau^{-3} \,.
	\]
	Thus after the summation over $\tau$, we see that 
	\[
		\E^{\times} (Q) \ll |Q+Q| |Q|^{3/2} \ll |Q|^{3/2 + \log_{h+1} (2h+1)} \,.
	\]
	If $\F = \F_p$, then we use Theorem \ref{t:SdZ} to derive
	\[
		\tau |Q| |\Omega_\tau| \ll \frac{|Q|^2|Q+Q||\Omega_\tau|}{p} + |\Omega_\tau|^{3/4} |Q+Q|^{1/2} |Q|^{3/2} + |\Omega_\tau| |Q| + |Q+Q| |Q|
	\]
	whence 
	\[
		\E^\times (Q) \ll \frac{|Q|^3|Q+Q|}{p} + |Q+Q|^{2/3} |Q|^2 \,.
	\]
	Let us obtain another bound.
	Using Theorem \ref{t:SdZ} again, we get
	\[
		\tau |Q| |\Omega_\tau| \ll \frac{|Q|^2|Q+Q||\Omega_\tau|}{p} + |\Omega_\tau|^{1/2} |Q+Q|^{3/4} |Q|^{3/2} + |\Omega_\tau| |Q| + |Q+Q| |Q|
	\]
	It gives us
	\[
		\E^\times (Q) \lesssim  \frac{|Q|^3|Q+Q|}{p} + |Q+Q|^{3/2} |Q| 
	\]
	as required.

	To obtain \eqref{f:Q_energy}, we write $\E^{\times} (Q) = |Q|^3/M$, where $M\ge 1$ is a number and we need to obtain a good lower  bound for $M$. 
	Using the Balog--Szemer\'edi--Gowers Theorem (see, e.g., \cite{TV}), we find $B \subseteq Q$ such that $|B| \gg_M |Q|$, $|BB| \ll_M |B|$. 
	We have
\[
	|B\pm B| \le |Q\pm Q| \le (2h+1)^d \le  |Q|^{3/2} \,.
\]
	But by the main result of \cite{s_weak}, we also have $|B-B| \gg_M |B|^{5/3}$ in $\R$ (there is  an analogues result for  $B+B$) and $|B-B| \gg_M |B|^{3/2+1/24}$ for $B$ in $\F_p$, $|B| <p^{24/49}$  see \cite[Theorem 4]{RSS}. 
	This completes the proof. 
	$\hfill\Box$
\end{proof}

\begin{remark}
	\label{r:proper_better}
	Applying the arguments from the proof of \cite[Lemma 4]{RS} one can improve  the dependence on $h$ in \eqref{f:Q_energy_R}, \eqref{f:Q_energy+}  for large $h$.
	We do not make such calculations.   
\end{remark}

\begin{remark}
\label{r:SZ}	
By  \cite[Corollary 1, Remarks 1,3]{SZ} we have  for any $B,C \subset \R$ with, say,  $|B| \sim |C| \ge 2$ that 
\begin{equation}\label{f:H_P}
	\E^\times (B+C) \ll |B|^{6-c} \,,
\end{equation}
where $c>0$ is an absolute constant. 
Obviously, we can split any proper cube $Q_h$ as  $Q_h = Q_h (X) + Q_h (Y)$, $|Q_h| =  |Q_h (X)| |Q_h (Y)|$ and $|Q_h (X)| \sim |Q_h (Y)|$   
(more generally, by Lemma \ref{l:H_P} for any cube one can split $[d]$ as a disjoint union of some sets $X$ and $Y$ such that $|Q_h (X)| \le |Q_h (Y)| \le h|Q_h (X)|$).   
Applying \eqref{f:H_P} with $B=Q_h (X)$, $C=Q_h (Y)$, we obtain a non--trivial upper bound for the multiplicative energy of any cube $Q_h$, provided 
$|Q_h (X)| |Q_h (Y)| \ll_h |Q_h|^{1+o(1)}$. 
In particular, this condition takes place if $Q_h$ is a proper cube. 
It gives an alternative proof of estimate \eqref{f:Q_energy}. 
\end{remark}

\section{General cubes and sets with missing digits}

Now we consider the case of general cubes \eqref{def:cube_intr}. 
It is relatively easy to see that such cubes must grow under multiplication, e.g., because they contains different shifts of subcubes of smaller dimension.
Nevertheless, the obtaining of 
upper bounds on different types of energies of the cubes is a  more delicate question. 
The main difference between  our new results and paper \cite{H_mult} is that they do not depend on the sumsets/the product sets of the considered cubes.

We start with a simple but a crucial combinatorial lemma.

\begin{lemma}
	Let $Q \subseteq \Gr$ be a combinatorial cube, $B\subseteq Q$ be  any set and let $h=1$. 
	Then there are two sets $S \subseteq Q+Q$, $D\subseteq Q-Q$ with $|S|, |D| \le |Q|^{3/2}$ such that for any $b_1,b_2 \in B$ either $b_1-b_2 \in D$ or $b_1+b_2 \in S$. 
	In particular, 
	\begin{equation}\label{f:Q^(k)}
	\E^{+} (B,Q) \ge |B|^2 |Q|^{1/2} \,.
	\end{equation}
	\label{l:Q^(k)}
\end{lemma}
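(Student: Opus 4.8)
The plan is to take, for $D$ and $S$, the \emph{popular} differences and sums of the whole cube $Q$ (not of $B$). Put $c:=2a_{0}+\sum_{j=1}^{d}a_{j}$, so that $Q=c-Q$ by \eqref{f:symm_cube} with $h=1$, and set
\[
D:=\{\,x\in Q-Q\ :\ r_{Q-Q}(x)\ge |Q|^{1/2}\,\},\qquad S:=\{\,x\in Q+Q\ :\ r_{Q+Q}(x)\ge |Q|^{1/2}\,\}.
\]
Since $\sum_{x}r_{Q-Q}(x)=\sum_{x}r_{Q+Q}(x)=|Q|^{2}$, Markov's inequality yields $|D|,|S|\le |Q|^{3/2}$ for free, so the whole content is the dichotomy. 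I would deduce it from the stronger pointwise estimate
\[
r_{Q-Q}(b_{1}-b_{2})\cdot r_{Q+Q}(b_{1}+b_{2})\ \ge\ |Q|\qquad\text{for all }b_{1},b_{2}\in Q,
\]
which forces $r_{Q-Q}(b_{1}-b_{2})\ge |Q|^{1/2}$ or $r_{Q+Q}(b_{1}+b_{2})\ge |Q|^{1/2}$, that is $b_{1}-b_{2}\in D$ or $b_{1}+b_{2}\in S$; the hypothesis $B\sbeq Q$ is used only to guarantee $b_{1}-b_{2}\in Q-Q$ and $b_{1}+b_{2}\in Q+Q$.

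To prove the pointwise estimate I would work with the digit sets. Fix $S_{1},S_{2}\sbeq[d]$ with $b_{i}=a_{0}+\sum_{j\in S_{i}}a_{j}$, and put $E:=S_{1}\triangle S_{2}$, $\overline{E}:=[d]\setminus E$, $I:=S_{1}\cap S_{2}$. For every $F\sbeq\overline{E}$ the two elements $a_{0}+\sum_{j\in(S_{1}\setminus S_{2})\cup F}a_{j}$ and $a_{0}+\sum_{j\in(S_{2}\setminus S_{1})\cup F}a_{j}$ lie in $Q$ and their difference is $b_{1}-b_{2}$; as $F$ ranges over the subsets of $\overline{E}$, the first element takes at least $|Q(\overline{E})|$ distinct values (because $F\mapsto\sum_{j\in F}a_{j}$ surjects onto $Q(\overline{E})-a_{0}$), so $r_{Q-Q}(b_{1}-b_{2})\ge |Q(\overline{E})|$. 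Symmetrically, for every $G\sbeq E$ the elements $a_{0}+\sum_{j\in I\cup G}a_{j}$ and $a_{0}+\sum_{j\in I\cup(E\setminus G)}a_{j}$ of $Q$ add up to $b_{1}+b_{2}$, giving $r_{Q+Q}(b_{1}+b_{2})\ge |Q(E)|$. Finally $|Q|\le |Q(E)|\,|Q(\overline{E})|$ by the definitions, and multiplying the three inequalities completes the pointwise estimate.

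For the ``in particular'' I would Cauchy--Schwarz the pointwise estimate summed over $b_{1},b_{2}\in B$:
\[
\E^{+}(B,Q)\cdot\Big(\sum_{b_{1},b_{2}\in B}r_{Q+Q}(b_{1}+b_{2})\Big)\ \ge\ \Big(\sum_{b_{1},b_{2}\in B}\big(r_{Q-Q}(b_{1}-b_{2})r_{Q+Q}(b_{1}+b_{2})\big)^{1/2}\Big)^{2}\ \ge\ |B|^{4}|Q|,
\]
using $\E^{+}(B,Q)=\sum_{b_{1},b_{2}}r_{Q-Q}(b_{1}-b_{2})$. It then remains to bound the left bracket by $\E^{+}(B,Q)$: it equals $\sum_{x}r_{B+B}(x)r_{Q+Q}(x)$, and substituting $q\mapsto c-q$ on $Q$ rewrites it as the common additive energy of the pairs $(B,Q)$ and $(c-B,Q)$, hence by Cauchy--Schwarz it is at most $\big(\E^{+}(B,Q)\,\E^{+}(c-B,Q)\big)^{1/2}=\E^{+}(B,Q)$, the last equality because $r_{(c-B)-(c-B)}=r_{B-B}$. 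Combining the two displays gives $\E^{+}(B,Q)^{2}\ge |B|^{4}|Q|$, as claimed.

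The one step I expect to require genuine care is the counting in the second paragraph when $Q$ is \emph{not} proper: one must check that the two constructed families of pairs really have at least $|Q(\overline{E})|$, respectively $|Q(E)|$, members — not merely $2^{|\overline{E}|}$, $2^{|E|}$ — which holds precisely because $F\mapsto\sum_{j\in F}a_{j}$ already surjects onto the relevant sub-cube, so any collisions are built in automatically. Everything else (the Markov bound for $|D|,|S|$, the two Cauchy--Schwarz steps, the inequality $|Q|\le |Q(E)||Q(\overline{E})|$, and the central symmetry \eqref{f:symm_cube}) is routine.
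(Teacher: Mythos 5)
Your proposal is correct and follows essentially the same route as the paper: the same digit-set construction showing $r_{Q+Q}(b_1+b_2)\ge |Q(E)|$ and $r_{Q-Q}(b_1-b_2)\ge |Q(\overline{E})|$ with $|Q|\le |Q(E)||Q(\overline{E})|$, the same popular sets $S,D$ with the Markov bound, and a Cauchy--Schwarz step to pass to $\E^{+}(B,Q)$. The only cosmetic difference is that you multiply the two lower bounds where the paper adds them via AM--GM, and you invoke the symmetry $Q=c-Q$ where the paper bounds $\sum_x r_{B+B}(x)r_{Q+Q}(x)\le \E^{+}(B,Q)$ directly; both variants are equivalent.
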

\begin{proof}
	Write $Q$ for $Q_h$.
	In the proof we use some parts of the arguments of the proof of Lemma \ref{l:Q^(k)_T_k}.  
	Let $x=b_1+b_2 \in B+B$, where $b_1,b_2 \in Q$.   
	For an arbitrary integer $h$ any $x\in Q+Q$ can be written as $x=\sum_{j\in P_1} \eps_j a_j + \sum_{j\in P_2} \tilde{\eps}_j a_j$, where $1\le \eps_j \le h$ on $P_1 \subseteq [d]$, and  $h<\tilde{\eps}_j \le 2h$ on   $P_2 \subseteq [d]$.
	Clearly, $P_1,P_2$  are disjoint sets and we put $Z = [d]\setminus (P_1 \sqcup P_2)$. 
	Now let us use that $h=1$ in our case. 
	Write $x= \sum_{j\in P_2} a_j + \sum_{j\in P_2} a_j + y_1 + y_2$, where $y_1,y_2 \in Q (P_1)$ such that $y_1+y_2 = \sum_{j\in P_1} \eps_j a_j$.
	Clearly, we have at least $|Q(P_1)|$ ways of writing $x$ this way and hence $r_{Q+Q} (x) \ge |Q (P_1)|$. 
	Further consider $x^*=b_1-b_2 = \sum_{j\in P'_1} \eps'_j a_j - \sum_{j\in P''_1} \eps''_j a_j$, where $1\le \eps'_j, \eps''_j \le h$ and $P'_1 \sqcup P'_2 = P_1$. 
	As above 
	we have at least $|Q (P_2 \sqcup Z)|$ ways of writing $x^*$ as $x^*=q^*_1-q^*_2$, $q^*_1, q^*_2 \in Q$. 
	Thus 
	\[
	r_{Q+Q} (q_1+q_2) + r_{Q-Q} (q_1-q_2) \ge |Q (P_1)| + |Q (P_2\sqcup Z)| \ge 2 \sqrt{|Q (P_1)||Q (P_2\sqcup Z)|} \ge 2|Q|^{1/2} \,.
	\] 
	Summing the last bound  over $q_1,q_2\in B$ and using the H\"older inequality, we obtain 
	\[
	2\E^{+}(B,Q)  \ge 2 |Q|^{1/2} |B|^2  
	\]
	as required. 
	Finally, either $|Q (P_1)|$ or $|Q (P_2\sqcup Z)|$ is at least $\sqrt{|Q|}$. 
	Hence either $r_{Q-Q} (b_1-b_2) \ge \sqrt{|Q|}$ or $r_{Q+Q} (b_1+b_2) \ge \sqrt{|Q|}$.
	It remains to define 
\begin{equation}\label{def:S,D} 
	S = \{ x \in Q+Q ~:~ r_{Q+Q} (x) \ge \sqrt{|Q|} \},\,   \quad \quad D = \{ x\in Q-Q ~:~ r_{Q-Q} (x) \ge \sqrt{|Q|} \} 
\end{equation}
	 and notice that $|S|,|D| \le |Q|^{3/2}$.  
	This completes the proof. 
	$\hfill\Box$
\end{proof}

\bigskip

\begin{remark}
	Estimate \eqref{f:Q^(k)} is the best possible up to factors $|Q_h|^{o(1)}$. 
	Indeed, just consider a proper cube $Q_h$ such that $Q_h+Q_h$ is also a proper cube.
	Further take  $B \subset Q_h$ such that for any $b\in B$, $b=a_0 + h\sum_{j \in S_b} a_j$, where the set $S_b$ is taken randomly with probability $1/2$. Then with high probability for any $b,b' \in B$ we have $r_{Q_h+Q_h} (b+b'), r_{Q_h-Q_h} (b-b') \ll |Q_h|^{1/2+o(1)}$ and hence 
	$\E^{+} (B,Q_h) \ll |B|^2 |Q_h|^{1/2+o(1)}$.     
	For $h=1$ the set $B$ is large, namely, $|B| \gg |Q|^{1-o(1)}$ but for $h>1$ is not. 
\end{remark}


Now we obtain the first main result on growth of combinatorial cubes. 
The bounds below can depend on the height $h$. 
The constant $c$ in \eqref{f:Q_prod_D} can be taken $c=1/25$ in both fields freely.

\begin{theorem}
	Let $Q_h \subseteq \F$ be a combinatorial cube.
	Then in $\R$ 
	\begin{equation}\label{f:Q_prod}
	|Q_h Q_h| \gtrsim |Q_h|^{100/79}, \quad \quad  |Q_h/Q_h| \gtrsim |Q_h|^{14/11} 
	\end{equation}
	and in $\F_p$ 
	\begin{equation}\label{f:Q_prod'}
		|Q_h Q_h|, |Q_h/Q_h| \gtrsim \max\{ \min\{ |Q_h|^{6/5} , \sqrt{|Q_h| p} \},  |Q_h|^{11/9} \} \,,
	\end{equation}
	where the second bound in the maximum is applicable for $|Q_h| \le p^{36/67}$ only. 
	Both in $\R$ and in $\F_p$ one has for any finite set $\mathcal{D}$ and a certain $c>0$ that 
	\begin{equation}\label{f:Q_prod_D}
		|Q_{\mathcal{D}} Q_{\mathcal{D}}|,\, 
		|Q_{\mathcal{D}}/Q_{\mathcal{D}}| 
			\gg \min \{ |Q|^{1+c}, |Q|^{2/5} p^{1/2} \} \,.
	\end{equation}
	Further, for $h=1$ and $\F = \R$ 
	there is an absolute constant $c_*>0$ such that 
	\begin{equation}\label{f:energy_Q1}
	\E^\times (Q) \ll |Q|^{3-c_*} \,.
	\end{equation}
	\label{t:energy_Q1} 
\end{theorem}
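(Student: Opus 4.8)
The plan is to prove the multiplicative-energy bound \eqref{f:energy_Q1} for $h=1$ over $\R$ by combining the combinatorial Lemma \ref{l:Q^(k)} with the sum--product machinery of Theorem \ref{t:s_sumsets}. Suppose, for contradiction, that $\E^\times (Q) \ge |Q|^{3}/M$ with $M = |Q|^{o(1)}$. By the Balog--Szemer\'edi--Gowers Theorem there is $B\subseteq Q$ with $|B|\gtrsim |Q|$ and $|BB|\lesssim |B|$ (all implied factors being $|Q|^{o(1)}$). Now I want to run the additive growth estimates of Theorem \ref{t:s_sumsets} for the set $B$ in the \emph{multiplicative} group, i.e. replace sums by products throughout. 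This gives, roughly, $|BB|\gtrsim |B|^{58/37} D_1^{-16/37} D_2^{-10/37}$ where $D_1$ measures $\E^\times_3(B)$ and $D_2$ measures $\E^\times(B,\cdot)$. Since $|BB|\lesssim |B|$, the only way out is that $B$ has very large multiplicative third energy and very large common multiplicative energy with every set — which must then be converted into large \emph{additive} structure of $Q$ via the missing-digit/cube geometry, producing a contradiction with the fact that $|Q\pm Q|\le 3^d\le |Q|^{\log_2 3}$.

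More precisely, the key step is to use Lemma \ref{l:Q^(k)} in its multiplicative incarnation. Lemma \ref{l:Q^(k)} as stated is additive, but the cube $Q=Q_1(a_0,\dots,a_d)$ is an additive object; the point is that $B\subseteq Q$ with small product set $|BB|\lesssim |B|$ forces $B$ to be multiplicatively structured, while Lemma \ref{l:Q^(k)} (with $h=1$) says that additively $B$ is ``captured'' by the two sets $S\subseteq Q+Q$, $D\subseteq Q-Q$ of size $\le |Q|^{3/2}$: every pair $b_1,b_2\in B$ has $b_1-b_2\in D$ or $b_1+b_2\in S$. So a positive proportion of pairs lies in (say) $D$, meaning $\E^+(B)\gtrsim |B|^4/|D| \gg |B|^{4}|Q|^{-3/2}\gg |B|^{5/2}$, i.e. $B$ has additive energy $\gg |B|^{5/2}$, hence small additive doubling in the BSG sense: a further refinement $B'\subseteq B$ has $|B'+B'|\lesssim |B'|^{3/2}$. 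Thus $B'$ has small product set \emph{and} small (fractional-power) sumset simultaneously. Feeding this into the $\R$-specific sum--product input behind Theorem \ref{t:s_sumsets} (or directly the result of \cite{s_weak} cited in the proof of Theorem \ref{t:Q_mult_energy}, which says $|B'-B'|\gg_M |B'|^{5/3}$ whenever $|B'B'|\ll_M|B'|$), one contradicts $|B'+B'|\lesssim |B'|^{3/2}$, since $3/2<5/3$. That contradiction yields \eqref{f:energy_Q1} with some explicit $c_*>0$.

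Concretely the steps are: (i) assume $\E^\times(Q)\ge |Q|^{3-o(1)}$ and apply BSG multiplicatively to extract $B\subseteq Q$ with $|B|\gtrsim|Q|$, $|BB|\lesssim|B|$; (ii) apply Lemma \ref{l:Q^(k)} with $h=1$ to locate $S\subseteq Q+Q$, $D\subseteq Q-Q$ of size $\le|Q|^{3/2}$ covering all sums/differences of pairs from $B$, and deduce by pigeonhole that either $\E^+(B)\gtrsim|B|^{5/2}$ or $r_{B+B}$ is concentrated on $S$; (iii) apply BSG additively to pass to $B'\subseteq B$ with $|B'\pm B'|\lesssim|B'|^{3/2}$ (up to $|Q|^{o(1)}$); (iv) invoke the weak-sumset/sum--product result of \cite{s_weak} (exactly as in the proof of Theorem \ref{t:Q_mult_energy}): $|B'B'|\lesssim|B'|$ forces $|B'-B'|\gg|B'|^{5/3}$, contradicting (iii); (v) trace the $|Q|^{o(1)}$ losses to obtain a genuine power saving $c_*>0$.

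The main obstacle I expect is step (ii)--(iii): the conclusion of Lemma \ref{l:Q^(k)} is a \emph{disjunctive} one (each pair lands in $D$ \emph{or} in $S$), so one has to argue that one of the two alternatives holds for a positive proportion of pairs from $B\times B$, and then that the resulting concentration of $r_{B\pm B}$ on a set of size $|Q|^{3/2}$ really does give $\E^+(B)\gtrsim|B|^{5/2}$ with the right (sub-polynomial) constants — and that the subsequent BSG refinement does not destroy the multiplicative smallness $|BB|\lesssim|B|$. Balancing the two BSG applications so that one keeps both $|B'B'|\lesssim|B'|^{1+o(1)}$ and $|B'\pm B'|\lesssim|B'|^{3/2+o(1)}$ on a common large subset is the delicate part; once that is done, the contradiction with $5/3>3/2$ is immediate and yields an explicit, if small, value of $c_*$.
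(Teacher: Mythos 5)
Your proposal only addresses the energy bound \eqref{f:energy_Q1}; the product/ratio set estimates \eqref{f:Q_prod}, \eqref{f:Q_prod'}, \eqref{f:Q_prod_D} (which occupy most of the theorem and use the symmetry $Q_h=(U+2a_0)-Q_h$ together with Szemer\'edi--Trotter, Theorem \ref{t:SdZ} and Theorem \ref{t:s_sumsets}) are not touched at all. But even for \eqref{f:energy_Q1} there is a genuine gap at your step (iii). From Lemma \ref{l:Q^(k)} and pigeonhole you correctly get $\E^{+}(B)\gg |B|^4|Q|^{-3/2}\gg |B|^{5/2}$, i.e.\ $\E^{+}(B)=|B|^3/K$ with $K\approx |B|^{1/2}$. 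The Balog--Szemer\'edi--Gowers theorem then produces $B'\subseteq B$ with $|B'|\gg |B|/K^{O(1)}$ and $|B'+B'|\ll K^{O(1)}|B'|$, and since $K$ is itself a \emph{polynomially large} power of $|B|$, both conclusions are vacuous: you cannot deduce $|B'\pm B'|\lesssim |B'|^{3/2}$ on a subset of size comparable to $|B|$. BSG converts \emph{near-extremal} energy into small doubling; energy of order $|B|^{5/2}$ is far from extremal, and no version of BSG upgrades it to doubling $|B'|^{1/2}$. The fallback in your first paragraph, namely $|Q\pm Q|\le 3^{d}\le |Q|^{\log_2 3}$, is equally unavailable: for a general (non-proper) cube $|Q|$ may be much smaller than $2^d$, so $3^d$ need not be a bounded power of $|Q|$ --- this is exactly why the general case cannot be handled by the argument of Theorem \ref{t:Q_mult_energy}, which is restricted to proper cubes.

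The paper closes this gap differently: it never applies BSG a second time. Keeping the single multiplicative BSG set $B$ with $\Pi=BB$, $|\Pi|\ll_M|B|$, it plays the lower bound $\E^{+}(B,Q)\ge |B|^2|Q|^{1/2}$ of Lemma \ref{l:Q^(k)} directly against the upper bound
\[
\E^{+}(B,Q)\le |B|^{-2}\,\bigl|\{(b,b',q,q',\pi,\pi')\colon \pi b+q=\pi'b'+q'\}\bigr|\ll |B|^{-2}|\Pi|^{3/2-\d}|B|^{4/3}|Q|^{5/3}
\]
furnished by Theorem \ref{t:4.5-eps} (growth in the affine group), which exploits the smallness of $\Pi=BB$ inside a three-variable energy rather than trying to convert additive energy into additive doubling. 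Comparing the two bounds gives $|Q|^{5/2}\ll_M |Q|^{5/2-\d}$ and hence $M\gg |Q|^{c_1}$. If you want to salvage your route, you must replace step (iii) by an inequality of this ``energy versus energy'' type; as written, the passage from $\E^{+}(B)\gg|B|^{5/2}$ to a set with doubling $|B'|^{1/2}$ is not a correct application of BSG and the proof does not go through.
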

\begin{proof}
	Let $Q=Q_h$. 
	First of all, we obtain a weaker result than \eqref{f:Q_prod} for $QQ$ and $Q/Q$.
	We restrict ourself considering the case $QQ$ only  because for $Q/Q$ the arguments are the same. 
	By \eqref{f:symm_cube} we see that the equation $x=(U+2a_0) - y$, $x,y\in Q$ has $|Q|$ solutions.
	In principle,  the number $U':=U+2a_0$ can be zero but then one can consider $Q_* = Q_{[d-1]}$, $|Q_*| \ge |Q|/h$ instead of $Q$. 
	Denote by $\sigma$ the number of the solutions to the equation $x=U'-y$, $x,y\in Q_*$ and below we use the same letter $Q$ for $Q_*$.
	One has 
	\begin{equation}\label{tmp:pi_1}
	\sigma \le |Q|^{-2} |\{ \pi_1/q_1 = U' - \pi_2/q_2  ~:~ q_1,q_2 \in Q,\, \pi_1,\pi_2 \in QQ\}| \,.
	\end{equation}
	Using the Szemer\'edi--Trotter Theorem in $\R$, we get 
	\[
	|Q|\ll \sigma \ll |Q|^{-2} (|QQ|^{4/3} |Q|^{4/3} + |QQ||Q|)
	\]
	and hence $|QQ| \gg |Q|^{5/4}$.
	To obtain improved  bound \eqref{f:Q_prod} just use Theorem \ref{t:s_sumsets} and notice that the parameters $D_1$, $D_2$ can be taken $D_1 = D^{2}_2 = (|QQ|/|Q|)^2$, see \cite{s_sumsets}. 
	The arguments similar to the arguments from  Remark \ref{r:SZ} give us \eqref{f:Q_prod_D} in $\R$. 
	Indeed, 
	we can split $Q = Q_{\mathcal{D}}$ as $Q=Q'+Q''$, where $2\le |Q'| \le |Q''|$ and apply the main result of \cite{SZ}, which says that for a certain $c>0$ the following holds 
	$$
		|QQ| = |(Q'+Q'') (Q'+Q'')| \gg |Q'+Q''|^{1+c} = |Q|^{1+c}
	$$
	and similar for $|Q/Q|$.

	In $\F_p$ we do the same, applying Theorem \ref{t:SdZ}. 
	Namely,
	\begin{equation}\label{tmp:pi_2}
	|Q|\ll \sigma \ll |Q|^{-2} \left( \frac{|QQ|^2 |Q|^2 }{p} + |QQ|^{5/4} |Q|^{3/2} + |QQ||Q| \right)	
	\end{equation}
	and thus $|QQ| \gg \min\{ |Q|^{6/5} , \sqrt{|Q|p} \}$. 
	If $|Q| \le p^{36/67}$, then we can apply Theorem \ref{t:RSS_2/9}. 
	To obtain \eqref{f:Q_prod_D} in the case of the finite field split $[d]$ onto five sets $X_1,\dots,X_5$ such that for $Q_j := Q_\mathcal{D} (X_j)$ 
	we have $|Q_j| \ge 2$. 
	It is possible to do because all $a_j$ do not vanish. 
	We have $Q = Q_1 + \dots +Q_5$. 
	Using Corollary \ref{c:Ruzsa_Sarkozy} we obtain the result.

	It remains to prove \eqref{f:energy_Q1}. 
	We write $\E^{\times} (Q) = |Q|^3/M$, where $M\ge 1$ is a number and we need to obtain a good lower  bound for $M$. 
	Using the Balog--Szemer\'edi--Gowers Theorem (see, e.g., \cite{TV}), we find $B \subseteq Q$ such that $|B| \gg_M |Q|$, $|BB| \ll_M |B|$. 
	Put $\Pi = BB$. 
	By Lemma \ref{f:Q^(k)} we have 
	\begin{eqnarray}\label{f:E_lower}
	\E^{+} (B,Q) \ge |B|^2 |Q|^{1/2} \,.
	\end{eqnarray} 
	Applying  Theorem \ref{t:4.5-eps}, we get  for a certain $\d>0$ that 
	\[
	\E^{+} (B,Q) \le |B|^{-2} |\{ (b,b',q,q',\pi,\pi') \in B^2\times Q^2 \times \Pi ~:~ \pi b + q = \pi' b + q \}| 
	\ll
	\]
	\begin{equation}\label{f:E_tmp} 
	\ll
		|B|^{-2} |\Pi|^{3/2-\d} |B|^{4/3} |Q|^{5/3}
	\end{equation}
	Since $|B| \gg_M |Q|$, $|\Pi| \ll_M |B|$ we see that 
	\[
		|Q|^{5/2} \ll_M |B|^2 |Q|^{1/2} \ll_M |Q|^{5/2-\d}
	\]
	In other words, $M\gg |Q|^{c_1}$, where $c_1>0$ is an absolute constant.  
	This completes the proof. 
	$\hfill\Box$
\end{proof}

\bigskip

Now we obtain an analogue of Theorem \ref{t:energy_Q1}  for multiplicative  combinatorial cube, see definition \eqref{def:cube_intr_mult}.

\begin{theorem}
	Let $Q^\times_h \subseteq \F$ be a multiplicative combinatorial cube.
	Then in $\R$ 
	\begin{equation}\label{f:Q_prod_m}
	|Q^\times_h + Q^\times_h| \gtrsim |Q^\times_h|^{100/79},\, \quad \quad   |Q^\times_h - Q^\times_h| \gtrsim  |Q^\times_h|^{14/11} \,, 
	\end{equation}
	and in $\F_p$ 
	\begin{equation}\label{f:Q_prod_m'}
		|Q^\times_h + Q^\times_h|, |Q^\times_h-Q^\times_h| \gg \min\{ |Q^\times_h|^{31/30} , \sqrt{|Q^\times_h| p} \} \,.
	\end{equation}
	Further, for $h=1$ 
	and $\F = \R$ 
	there is an absolute constant $c>0$ such that 
	\begin{equation}\label{f:energy_Q1_m}
	\E^+ (Q^\times) \ll |Q^\times|^{3-c} \,.
	\end{equation}
	\label{t:energy_Q1_m} 
\end{theorem}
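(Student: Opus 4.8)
The plan is to run the proof of Theorem~\ref{t:energy_Q1} with the roles of $+$ and $\times$ interchanged, since the statement is precisely its multiplicative dual. Two structural facts make this transfer work. First, the symmetry relation~\eqref{f:symm_cube} has a multiplicative analogue: writing $W:=a_0^2\prod_{j=1}^d a_j^h$ one checks, exactly as for~\eqref{f:symm_cube}, that $(Q^\times_h)'=\bigl(\prod_j a_j^h\bigr)/(Q^\times_h)'$ and hence $Q^\times_h=W/Q^\times_h$; moreover $W\neq 0$ because $a_0$ and all the $a_j$ are non-zero, so — unlike in the additive case — no degenerate branch occurs and $x\mapsto W/x$ is an involution of $Q^\times_h$. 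Second, Lemma~\ref{l:Q^(k)} is stated for an arbitrary abelian group $\Gr$, so applied to the multiplicative group it gives, for $h=1$ and any $B\subseteq Q^\times$, the bound $\E^\times(B,Q^\times)\ge |B|^2|Q^\times|^{1/2}$.

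For the sumset/difference-set bounds~\eqref{f:Q_prod_m},~\eqref{f:Q_prod_m'}: by the multiplicative symmetry relation the equation $xy=W$ has exactly $|Q^\times_h|$ solutions with $x,y\in Q^\times_h$, and lifting each solution by choosing $q_1,q_2\in Q^\times_h$ freely and setting $s_i:=x\pm q_i\in Q^\times_h\pm Q^\times_h=:S$ we obtain $|Q^\times_h|^3\le |\{(s_1,q_1,s_2,q_2)\in S\times Q^\times_h\times S\times Q^\times_h:(s_1\mp q_1)(s_2\mp q_2)=W\}|$. Over $\R$ the right-hand side counts incidences between the $\le|Q^\times_h|^2$ points $(q_1,q_2)$ and the $\le|S|^2$ curves $\{(u,v):(s_1\mp u)(s_2\mp v)=W\}$, which are translates of the fixed rectangular hyperbola $uv=W$; since any two such translates meet in at most two points, the Szemer\'edi--Trotter bound of Theorem~\ref{t:SzT} (in its standard form for curve families with bounded pairwise intersection) gives the crude estimate $|S|\gg|Q^\times_h|^{5/4}$, which is then amplified to $|Q^\times_h+Q^\times_h|\gtrsim|Q^\times_h|^{100/79}$ and $|Q^\times_h-Q^\times_h|\gtrsim|Q^\times_h|^{14/11}$ by Theorem~\ref{t:s_sumsets} with $D_1=D_2^2=(|S|/|Q^\times_h|)^2$, exactly as in the proof of~\eqref{f:Q_prod}. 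Over $\F_p$ one replaces Theorem~\ref{t:SdZ} by the hyperbola-incidence bound of Theorem~\ref{t:sh_Kloosterman} applied to the same four-variable count (with the obvious sign change): a direct application already yields $|Q^\times_h\pm Q^\times_h|\gg\min\{|Q^\times_h|^{1+c'},\sqrt{|Q^\times_h|p}\}$ for an absolute $c'>0$, and optimising among the terms it produces — if necessary after splitting $Q^\times_h$ into a product of five sub-cubes and invoking Theorem~\ref{t:GMR}, in the spirit of Corollary~\ref{c:Ruzsa_Sarkozy} — gives the stated exponent $31/30$.

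For the energy bound~\eqref{f:energy_Q1_m} we have $h=1$, so $Q^\times$ is a genuine cube. Write $\E^+(Q^\times)=|Q^\times|^3/M$ and apply the (additive) Balog--Szemer\'edi--Gowers theorem to produce $B\subseteq Q^\times$ with $|B|\gg_M|Q^\times|$ and $|B+B|\ll_M|B|$. By Lemma~\ref{l:Q^(k)} in the multiplicative group, $\E^\times(B,Q^\times)\ge|B|^2|Q^\times|^{1/2}$. For the matching upper bound put $\Pi:=B+B$; every $b\in B$ equals $\pi-b'$ with $\pi=b+b'\in\Pi$ and $b'\in B$ arbitrary, hence $\E^\times(B,Q^\times)\le|B|^{-2}\sum_t r^2_{Q^\times(\Pi-B)}(t)$, and the last sum is bounded by Theorem~\ref{t:4.5-eps} applied with $A=\Pi$, $C=-B$ and with $Q^\times$ playing the role of the set denoted $B$ there (the hypothesis $|C|^\kappa\le|B|\le|C|^2$ holds since $|B|$ and $|Q^\times|$ agree up to powers of $M$), giving $\E^\times(B,Q^\times)\ll|\Pi|^{4/3}|Q^\times|^{3/2-\d}|B|^{5/3}$ for some $\d>0$. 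Since $|\Pi|\ll_M|B|\ll_M|Q^\times|$ this forces $|Q^\times|^{5/2}\ll_M|Q^\times|^{5/2-\d}$, so $M\gg|Q^\times|^{c}$ for an absolute $c>0$, which is~\eqref{f:energy_Q1_m}.

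The only genuinely non-formal point is the $\R$ case of the incidence step: Theorem~\ref{t:SzT} as quoted applies to pseudo-line systems, whereas translates of a rectangular hyperbola form only a pairwise $2$-intersecting family (they can meet in two real points), so one must either invoke the routine extension of the Szemer\'edi--Trotter bound to curve families with bounded pairwise intersection or re-coordinatise so that the curves become genuine (pseudo)lines. The other delicate item is matching the exact constant $31/30$ in $\F_p$: any positive exponent drops out of Theorem~\ref{t:sh_Kloosterman} at once, but identifying the dominant term (and, if needed, the five-fold splitting) to reach precisely $31/30$ requires some bookkeeping, while the verification of the $M$-dependent size comparabilities in the hypotheses of Theorems~\ref{t:4.5-eps} and~\ref{t:s_sumsets} is routine.
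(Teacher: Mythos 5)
Your proposal is correct and follows essentially the same route as the paper: dualize the proof of Theorem \ref{t:energy_Q1} via the multiplicative symmetry $Q^\times_h=W/Q^\times_h$, handle the resulting hyperbola incidences by Szemer\'edi--Trotter over $\R$ (amplified through Theorem \ref{t:s_sumsets} with $D_1=D_2^2=(|Q^\times_h\pm Q^\times_h|/|Q^\times_h|)^2$, which is exactly how $100/79$ and $14/11$ arise) and by Theorem \ref{t:sh_Kloosterman} over $\F_p$, and prove \eqref{f:energy_Q1_m} by Balog--Szemer\'edi--Gowers, Lemma \ref{l:Q^(k)} in the multiplicative group, and Theorem \ref{t:4.5-eps}. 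The one cosmetic difference is that the exponent $31/30$ comes out of a single application of Theorem \ref{t:sh_Kloosterman} to the count $|\{(s,s',x,x')\in(Q+Q)^2\times Q^2:(s-x)(s'-x')=W\}|$, whose dominant error term $|Q+Q|^{5/4}|Q|^{-7/24}$ gives it directly, so no five-fold splitting or appeal to Theorem \ref{t:GMR} is needed.
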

\begin{proof}
	Let $Q=Q^\times_h$ and we consider the case of the addition only because for the subtraction the argument is the same. 
	The arguments which give \eqref{f:Q_prod} are applicable for  \eqref{f:Q_prod_m} if one replaces the addition to the multiplication because now we arrive to the equation of the hyperbolas $xy = \lambda$, which form a pseudo--line system. 
	The same concerns \eqref{f:energy_Q1_m} in the case $\F = \R$ because Theorem \ref{t:4.5-eps} works perfectly for the addition and for the multiplication.
	As for \eqref{f:Q_prod_m'}  we follow the same scheme but apply Theorem 
	\ref{t:sh_Kloosterman},  
	which gives us for any $\lambda \neq 0$ that 
\[
	|Q| \ll |Q|^{-2} |\{ (s,s',x,x')\in (Q+Q)^2 \times Q^2 ~:~ (s-x)(s'-x') = \lambda \}| 
	\lesssim 
\]
\[
	\lesssim 
	\frac{|Q+Q|^2}{p} + |Q+Q|^{3/4} + |Q+Q|^{5/4} |Q|^{-7/24} \,.
\]
	The last estimate implies \eqref{f:Q_prod_m'}. 
This completes the proof. 
$\hfill\Box$
\end{proof} 


\begin{remark}
	\label{r:gen_better}
	Again similar to Remark \ref{r:proper_better} one can apply the arguments from \cite{Olmezov_convex}, \cite{RS} to improve the constants in \eqref{f:Q_prod}, \eqref{f:Q_prod_m}. 
	It is possible to check that the constant $100/79$ can be replaced to $52/41$.  
	We leave these calculations for the interested reader.
	Instead of we use general Theorem \ref{t:s_sumsets} (which equally works in the case of the prime field) because our main aim is to obtain energy bounds.  
\end{remark}

Now we obtain a non--trivial bound for 
the additive energy 
of combinatorial cubes from $\F_p$, which defined in \eqref{def:cube_intr_mult}.
Probably, Theorem \ref{t:sigma_new}  below 
is the deepest result of our paper.
We need a combinatorial result, which is a small generalization of Lemma 2 from beautiful paper \cite{Olmezov_elementary} devoted to an elementarisation of the eigenvalues method see, e.g., \cite{s_sumsets}.

\begin{lemma}
	Let $A,B,D \subseteq \Gr$ be sets and  $1\le s< n$, $m\ge 1$ be positive integers.
	Then 
	\[
	\left( \sum_{x \in A} B(y) D(y-x)  \right)^{mn} \le |A|^{(n-1)m} |B|^{s(m-1)} |D|^{(n-s)(m-1)}  \times 
	\]
	\begin{equation}
	\times 
	\sum_{\vec{x}}  \sum_{\vec{y}} \Cf^{n-s}_m (B) (\vec{x}) 
	\Cf_{m+s} (A, A,\dots, A, B, \dots, B) (\vec{x}, \vec{y}) 
	\prod_{i=1}^s \prod_{j=1}^{m} D(y_i - x_j) \,, 
	\end{equation}
	where $\vec{x} = (x_2,\dots, x_m)$, $x_1=0$ and $\vec{y} = (y_1,\dots, y_s)$. 
\label{l:Olmezov}
\end{lemma}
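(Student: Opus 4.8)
The plan is to prove Lemma \ref{l:Olmezov} by a repeated application of the Cauchy--Schwarz (or Hölder) inequality, following and slightly generalizing the argument of Lemma~2 in \cite{Olmezov_elementary}. Set $F(x) = \sum_y B(y) D(y-x)$, so that $\sum_{x\in A} F(x)$ is the quantity we wish to bound. The first move is to separate the roles of the $s$ ``$D$-fibres'' we want to keep and the remaining structure: write the $n$-th power of $\sum_{x\in A} F(x)$ and peel off $n-1$ of the summations using Hölder, keeping one distinguished copy, which is what produces the factor $|A|^{(n-1)m}$ after the $m$-fold iteration. Concretely, I expect the cleanest route is to first raise to the power $n$, interpret $F(x)^n$ via the substitution $y_i = y_1 + u_i$ and reorganize so that $s$ of the variables are ``linked'' to the $D$'s while $n-s$ of them only see $B$'s (this is where $\Cf^{n-s}_m(B)$ and the split $|B|^{s(m-1)}|D|^{(n-s)(m-1)}$ come from), and then raise to the power $m$ and apply Hölder once more in the $A$-variables to generate the $\Cf_{m+s}(A,\dots,A,B,\dots,B)$ convolution with $m$ copies of $A$ and $s$ copies of $B$.

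The key steps, in order, are: (1) rewrite $\left(\sum_{x\in A} F(x)\right)^{mn}$ as a sum over $mn$ tuples and introduce difference variables $\vec{x}=(x_2,\dots,x_m)$ (with $x_1=0$) and $\vec{y}=(y_1,\dots,y_s)$ so that the $B$- and $D$-weights become the product $\prod_{i=1}^s\prod_{j=1}^m D(y_i - x_j)$ together with $\Cf^{n-s}_m(B)(\vec{x})$; (2) apply Hölder's inequality in the $x$-variables belonging to $A$ to pull out $|A|^{(n-1)m}$ and leave the mixed correlation $\Cf_{m+s}(A,\dots,A,B,\dots,B)(\vec{x},\vec{y})$; (3) apply Hölder (or crude bounding by the sizes of $B$ and $D$) to the remaining free $B$- and $D$-summations, producing the exponents $s(m-1)$ and $(n-s)(m-1)$; (4) collect the surviving sum over $\vec{x},\vec{y}$ into the stated right-hand side. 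Since this is an inequality with no sharpness claim, every step is just a one-directional estimate, so the bookkeeping of which variable gets Hölder'd against which is the only real content.

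The main obstacle I anticipate is purely combinatorial rather than analytic: correctly tracking the multi-indices so that the counting function $\Cf_{m+s}$ genuinely has $m$ slots filled by $A$ and $s$ slots filled by $B$ in the right cyclic arrangement, and so that the exponents $(n-1)m$, $s(m-1)$, $(n-s)(m-1)$ match exactly after the two rounds of Hölder. In \cite{Olmezov_elementary} this is done for a specific small case, and the generalization to arbitrary $s<n$ and $m\ge 1$ requires being careful that the distinguished index $x_1=0$ is consistently excluded from the Hölder averaging and that no factor of $|A|$ or $|B|$ is double-counted. Once the index conventions are pinned down, the inequalities themselves are routine applications of Hölder; I would present the proof by first doing the $n$-fold step cleanly, then the $m$-fold step, and finally reading off the exponents. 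The condition $1\le s<n$ is used precisely to guarantee the $\Cf^{n-s}_m(B)$ factor is nontrivial and that the split of the $y$-variables into $s$ ``kept'' and $n-s$ ``absorbed'' coordinates makes sense.
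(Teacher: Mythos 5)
Your plan follows essentially the same route as the paper's proof: two rounds of H\"older (first in the exponent $n$ over $x\in A$, which after the $m$-fold step yields $|A|^{(n-1)m}$, then in the exponent $m$ over the outer $\vec y$-variables, yielding $|B|^{s(m-1)}|D|^{(n-s)(m-1)}$), combined with the crucial switch of which of $B$, $D$ serves as the outer constraint for $n-s$ of the coordinates --- exactly the ``switching'' identity the paper isolates in its displayed formula. The only bookkeeping caveat is the order of operations: H\"older in $x\in A$ at the $n$-stage, split $\vec y=(\vec y_1,\vec y_2)$ and switch roles, H\"older in $(\vec y_1,\vec y_2)$ at the $m$-stage, and only then sum out $\vec y_2$ to form $\Cf^{n-s}_m(B)$ and shift by $x_1$ to form $\Cf_{m+s}$; with that pinned down your outline is correct.
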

\begin{proof} 
	For $x\in \Gr$ write $\D(x) = (x,x,\dots,x)$. 
	Also, let $S^k$, $k\ge 1$  denotes 
	the 
	Cartesian product of a 	set $S \subseteq \Gr$. 
	Using the H\"older inequality, we obtain 
	\[
	\sigma^n = \left( \sum_{x \in A} B(y) D(y-x)  \right)^n \le |A|^{n-1} \sum_{x \in A} \sum_{\vec{y}} B^{n} (\vec{y}) D^n (\vec{y}-\D(x))  
	=
	\]
	\begin{equation}\label{f:explanation_Olmezov}
	=
	|A|^{n-1} \sum_{\vec{y}_1, \vec{y}_2} B^{s} (\vec{y}_1) D^{n-s} (\vec{y}_2) \sum_{x \in A} B^{n-s} (\vec{y}_2+\D(x)) D^{s} (\vec{y}_1-\D(x)) \,.
	\end{equation}
	Here $\vec{y} = (\vec{y}_1, \vec{y}_2)$, $0\le s\le n$, the vector $\vec{y}_1$ has $s$ components and the vector  $\vec{y}_2$ has $(n-s)$ components. 
	Formula \eqref{f:explanation_Olmezov} shows the main idea of the proof: we can switch freely  the restrictions  on components of all obtained vectors between the inner and the external sums. 
	Now again  by the H\"older inequality, we derive
	\[
	\sigma^{nm} \le |A|^{(n-1)m} |B|^{s(m-1)} |D|^{(n-s)(m-1)} \times
	\]
	\[
	\times 
	\sum_{\vec{y}_1, \vec{y}_2} B^{s} (\vec{y}_1) D^{n-s} (\vec{y}_2) \sum_{x_1, \dots, x_m \in A}\, \prod_{j=1}^m B^{n-s} (\vec{y}_2+\D(x_j)) D^{s} (\vec{y}_1-\D(x_j))
	\le 
	\]
	\[
	\le |A|^{(n-1)m} |B|^{s(m-1)} |D|^{(n-s)(m-1)} 
	\sum_{\vec{y}_1, \vec{y}_2} B^{s} (\vec{y}_1) \sum_{x_1, \dots, x_m \in A}\, \prod_{j=1}^m  B^{n-s} (\vec{y}_2+\D(x_j)) D^{s} (\vec{y}_1-\D(x_j))
	\,.
	\]
	Let $\sigma^{nm}_1 =  \sigma^{nm} / |A|^{(n-1)m} |B|^{s(m-1)} |D|^{(n-s)(m-1)}$.  
	Summing over $\vec{y}_2$ and changing the variables, we get 
	\[
	\sigma^{nm}_1 \le 
	\sum_{\vec{y}_1} B^{s} (\vec{y}_1) \sum_{x_1, \dots, x_m \in A}\, \Cf^{n-s}_m (B) (x_2-x_1, \dots, x_m-x_{1})  
	\prod_{j=1}^m D^{s} (\vec{y}_1-\D(x_j))
	=
	\]
	\[
	=
	\sum_{\vec{y}_1} B^{s} (\vec{y}_1) A(x_1) A(x_1+x_2) \dots A(x_1+x_m) \Cf^{n-s}_m (B) (x_2, \dots, x_{m})  
	D^{s} (\vec{y}_1-\D(x_1)) \prod_{j=2}^m D^{s} (\vec{y}_1-\D(x_j+x_1))
	\]
	\[
	= 
	\sum_{\vec{y}_1} \sum_{x_2,\dots, x_m} \Cf^{n-s}_m (B) (x_2, \dots, x_{m}) \Cf_{m+s} (A, A,\dots, A, B, \dots, B) (x_2, \dots, x_{m}, \vec{y}_1) 
	D^s (\vec{y}_1) \prod_{j=2}^m D^{s} (\vec{y}_1-\D(x_j))
	\]
	as required. 
	$\hfill\Box$
\end{proof}

\begin{theorem}
	Let $Q^\times \subseteq \F_p$ be a combinatorial cube,  $|Q^\times| \le p^{13/23}$.  
	Then 	there is an absolute constant $c>0$ such that 
\begin{equation}\label{f:sigma_new}	
	\E^+ (Q^\times) \ll |Q^\times|^{3-c} \,.	
\end{equation}
\label{t:sigma_new}
\end{theorem}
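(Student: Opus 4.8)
The strategy mirrors the proof of Theorem~\ref{t:energy_Q1_m} (estimate \eqref{f:energy_Q1_m}) but must be pushed through in the $\F_p$ setting, so I expect to combine the Balog--Szemer\'edi--Gowers machinery with the combinatorial Lemma~\ref{l:Q^(k)} and the point--plane/incidence input (Theorems~\ref{t:Misha+}, \ref{t:sh_Kloosterman}), together with the new higher-energy Lemma~\ref{l:Olmezov}. Write $Q=Q^\times$ and set $\E^+(Q) = |Q|^3/M$, so the task is to prove $M \gg |Q|^{c}$. Since $Q$ is a \emph{multiplicative} cube, Lemma~\ref{l:Q^(k)} (applied in the multiplicative group) yields, for any $B\subseteq Q$, two sets $S\subseteq QQ$, $D\subseteq Q/Q$ with $|S|,|D|\le |Q|^{3/2}$ such that for any $b_1,b_2\in B$ either $b_1/b_2\in D$ or $b_1b_2\in S$; in particular $\E^\times(B,Q)\ge |B|^2|Q|^{1/2}$. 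First I would apply Balog--Szemer\'edi--Gowers to the additive energy of $Q$: there is $B\subseteq Q$ with $|B|\gg_M |Q|$ and $|B+B|\ll_M |B|$ (or $|B-B|\ll_M|B|$).

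Next, the core step: I would feed the multiplicative structure of $B$ (small $B+B$) into an incidence count for the equation coming from $\E^\times(B,Q)\ge |B|^2|Q|^{1/2}$. Writing the energy relation as a count of solutions to $q_1 s - q_2 = q_1 s' - q_2$ type equations, or more precisely to $\pi/b = \pi'/b$ with $\pi,\pi'\in$ the relevant dilate — here one must replace the real ingredient Theorem~\ref{t:4.5-eps} (growth in the affine group over $\R$) by its $\F_p$ counterpart. The natural substitutes are Theorem~\ref{t:sh_Kloosterman} (growth in the modular group, giving control of $(a+b)(c+d)=\lambda$ counts) and Theorem~\ref{t:Misha+} (point--plane incidences with the collinearity parameter $k$). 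The role of Lemma~\ref{l:Olmezov} is to convert the basic energy bound into a higher-energy statement $\T^\times_m$ or $\E^\times_m$ for $B$, which is what one needs to run the point--plane incidence bound efficiently (the parameter $k$ of maximal collinearity must be controlled by a cube-structure argument: a line meets $Q^\times$, resp.\ $B$, in few points because otherwise a one-dimensional projective substructure would force an additive collision count incompatible with $|B+B|\ll_M|B|$). Balancing these gives $|Q|^{5/2} \ll_M |Q|^{5/2 - \d}$ for some $\d>0$ depending only on the exponents, hence $M\gg |Q|^{c}$.

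The restriction $|Q|\le p^{13/23}$ should arise exactly where the $p$-term in the incidence estimates (the $|\mathcal P||\Pi|/p$ or $|Q+Q|^2/p$ main terms in Theorems~\ref{t:Misha+} and \ref{t:sh_Kloosterman}) must be dominated by the combinatorial terms; the exponent $13/23$ is the threshold at which the relevant balancing in Theorem~\ref{t:sh_Kloosterman} (with its $41/48$ exponent) still beats the trivial bound, just as $p^{24/49}$ and $p^{36/67}$ appeared in the earlier results. So I would track this inequality carefully to pin down the admissible range.

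\textbf{Main obstacle.} The hard part is the bound on collinear points: to apply Theorem~\ref{t:Misha+} one needs that no line contains too many points of the relevant point set built from $B$, and for a general (non-proper) cube this is not automatic — proper cubes were handled by their explicit product structure, but here $Q^\times$ may be highly degenerate. I expect the resolution to combine Lemma~\ref{l:H_P} (splitting $[d]$ so that two sub-cube factors are comparable, which forces some genuine two-dimensional structure) with Lemma~\ref{l:Q^(k)} to show that a line carrying many points of $B$ would contradict either $|B+B|\ll_M|B|$ or the lower bound $\E^\times(B,Q)\ge |B|^2|Q|^{1/2}$. Getting the quantitative dependence in this collinearity bound to be a genuine power of $|Q|$ (not merely polylogarithmic) is the delicate point on which the whole argument — and the final constant $c$ — hinges.
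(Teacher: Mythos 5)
Your skeleton matches the paper's proof: Balog--Szemer\'edi--Gowers applied to $\E^+(Q)$ to extract $B\subseteq Q$ with $|B+B|\ll_M|B|$, then the multiplicative form of Lemma \ref{l:Q^(k)} to produce the sets $S,D$ of size at most $|Q|^{3/2}$ with $\sum_{x\in S}r_{BB}(x)+\sum_{x\in D}r_{B/B}(x)\ge |B|^2$, then Lemma \ref{l:Olmezov} to exploit this. But the step you yourself flag as the ``main obstacle'' is exactly the step that carries the content of the proof, and you have not supplied it. The paper does \emph{not} prove any collinearity bound and does not use Theorem \ref{t:Misha+} or Theorem \ref{t:sh_Kloosterman} here. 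Instead, Lemma \ref{l:Olmezov} with $m=n=2$, $s=1$ applied to $A=B^{-1}$, $B=B$, $D=S$ gives
\[
|B|^{10} \ll |S|^3\, \E^{\times}_3(B)\, \E^{\times}(B)\,,
\]
and the two multiplicative energies of $B$ are then bounded by \emph{quoting} ready-made sum--product results from \cite{collinear}: Lemmas 23 and 25 there give $\E^{\times}_3(B)\lesssim_M M^{15/4}|B|^3$ via the additive doubling of $B$, and Theorem 35 there gives $\E^{\times}(B)\lesssim_M |B|^{32/13}$ provided $|B|\le p^{13/23}$. Plugging in $|S|\le |Q|^{3/2}$ yields $|B|^{10}\lesssim_M M^{15/4}|B|^{10-1/26}$ and hence $M\gg|B|^c$. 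So the threshold $p^{13/23}$ is the hypothesis of \cite[Theorem 35]{collinear}, not a balancing of the $41/48$ exponent in Theorem \ref{t:sh_Kloosterman} as you guessed; and the final inequality is $|B|^{10}\ll_M|B|^{10-1/26}$, not the real-case shape $|Q|^{5/2}\ll_M|Q|^{5/2-\d}$ you wrote down.

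Concretely, the gap is this: you propose to substitute for Theorem \ref{t:4.5-eps} some combination of Theorems \ref{t:Misha+} and \ref{t:sh_Kloosterman} together with a cube-structure collinearity argument, but you give no argument that a line cannot contain many points of the relevant point set, and for a general multiplicative cube such a bound is not at all clear (you acknowledge this). Without it your route does not close. The correct move is to bypass incidence geometry at this stage entirely and invoke the known $\F_p$ estimates for $\E^{\times}_k$ of sets with small additive doubling; once those are cited, the role of Lemma \ref{l:Olmezov} is not to produce a higher energy of $B$ for an incidence count, but to trade the single sum $\sum_{x\in S}r_{BB}(x)\ge|B|^2/2$ for the product $|S|^3\E^{\times}_3(B)\E^{\times}(B)$, where the saving comes from $|S|\le|Q|^{3/2}$ beating the trivial $|QQ|$.
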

\begin{proof} 
	Let $Q=Q^\times$. 
	As in the proof of inequality \eqref{f:energy_Q1} of Theorem \ref{t:energy_Q1} we write $\E^{+} (Q) = |Q|^3/M$, where $M\ge 1$ is a number and we need to obtain a good lower  bound for $M$. 
	Using the Balog--Szemer\'edi--Gowers Theorem (see, e.g., \cite{TV}), we find $B \subseteq Q$ such that $|B| \gg_M |Q|$, $|B+B| \ll_M |B|$. 
	By Lemma \ref{l:Q^(k)} we find the sets $D,S$ such that $|S|, |D| \le |Q|^{3/2}$ and either $\sum_{x\in S} r_{BB} (x) \ge |B|^2/2$ or $\sum_{x\in D} r_{B/B} (x) \ge |B|^2/2$.   
	Without loosing of the generality consider the first case. 
	Applying Lemma \ref{l:Olmezov} with the parameters $m=n=2$, $s=1$  to the sets $A=B^{-1}$, $B=B$, $D=S$, we obtain 
\[
	|B|^{8} \ll \left(\sum_{x\in S} r_{BB} (x) \right)^4 \le |B|^3 |S| \sum_{x,y} r_{B/B} (x) \Cf^{\times}_3 (B^{-1},B^{-1},B) (x,y) S(y) S(y/x) \,. 
\]  
	Using the H\"older inequality, we get 
\begin{equation}\label{tmp:25.07_1}
	|B|^{10} \ll |S|^2 \E^{\times}_3 (B) \sum_z r^2_{B/B} (z) r_{S/S} (z) \le |S|^3 \E^{\times}_3 (B) \E^{\times} (B) \,.
\end{equation}
	To estimate $\E^{\times}_3 (B)$,  $\E^{\times} (B)$   we apply \cite[Lemmas 23, 25]{collinear}.
	In terms of Theorem \ref{t:s_sumsets} these lemmas give us $D_1 = (|B\pm B|/|B|)^{15/4}$ and $D_2 = (|B\pm B|/|B|)^{3/2}$, provided 
	$|B|^{11} |B\pm B| \le p^{8}$ and $|B|^2 |B\pm B| \le p^2$. 
	Also, \cite[Theorem 35]{collinear} implies $\E^{\times} (B) \lesssim_M |B|^{32/13}$, provided $|B| \le p^{13/23}$.
	The restrictions to size of $B$ and $B\pm B$ 
	can be simplified as $|Q| \le p^{13/23}$ because one can assume that the parameter $M$ is sufficiently small.     
	Substituting the last bounds into \eqref{tmp:25.07_1} and recalling that $|B| \gg_M |Q|$, $|S| \le |Q|^{3/2}$, we 
	obtain 
\[
	|B|^{10} \lesssim_M  |S|^3 \cdot  M^{15/4} |B|^3 \cdot |B|^{32/13} \ll_M |B|^{10-1/26} \,.
\]
	Hence for an absolute constant $c>0$ one has $M\gg  |B|^c$ and thus we obtain \eqref{f:sigma_new}. 
This completes the proof. 
$\hfill\Box$
\end{proof}

\bigskip

At the end of
our article 
we formulate a hypothesis in the spirit of paper \cite{BR-NZ}.

{\bf Conjecture.} Let $Q\subset \R$ be a combinatorial cube. 
Then for any  integer $m$ there is an integer $n=n(m)$ such that $|Q^n| \gg |Q|^m$. 
Is it true that the polynomial growth takes place?



\bigskip

\noindent{Steklov Mathematical Institute,\\
	ul. Gubkina, 8, Moscow, Russia, 119991}
\\
and
\\
IITP RAS,  \\
Bolshoy Karetny per. 19, Moscow, Russia, 127994\\
and 
\\
MIPT, \\ 
Institutskii per. 9, Dolgoprudnii, Russia, 141701\\
{\tt ilya.shkredov@gmail.com}

\end{document}